\newtheorem{theorem}{Theorem}[section]
\newtheorem{claim}[theorem]{Claim}
\newtheorem{fact}[theorem]{Fact}
\newtheorem{corollary}[theorem]{Corollary}
\newtheorem{proposition}[theorem]{Proposition}
\theoremstyle{definition}
\newtheorem{question}[theorem]{Question}
\newtheorem{example}[theorem]{Example}
\theoremstyle{remark}
\newtheorem{remark}[theorem]{Remark}
\numberwithin{equation}{section}
\newcommand{\cE}{\mathcal{E}}
\newcommand{\cA}{\mathcal{A}}
\newcommand{\cB}{\mathcal{B}}
\newcommand{\cG}{\mathcal{G}}
\newcommand{\bP}{\mathbb{P}}
\newcommand{\bE}{\mathbb{E}}
\newcommand{\N}{\mathbb{N}}
\newcommand{\R}{\mathbb{R}}
\newcommand{\ga}{\gamma }
\newcommand{\Ga}{\Gamma }
\newcommand{\Om}{\Omega }
\newcommand{\Energy}{\mathop{\mathrm{Energy}}}
\newcommand{\Ker}{\mathop{\mathrm{Ker}}}
\newcommand{\diam}{\mathop{\mathrm{diam}}}
\newcommand{\Diag}{\mathop{\mathrm{Diag}}}
\newcommand{\epdist}{\mathop{\mathrm{Epidemic}}}
\newcommand{\length}{\mathop{\mathrm{length}}}
\newcommand{\Div}{\mathop{\mathrm{div}}}
\newcommand{\Dim}{\mathop{\mathrm{Dim}}}
\newcommand{\Span}{\mathop{\mathrm{Span}}}
\newcommand{\Mod}{\mathop{\mathrm{Mod}}}
\newcommand{\capa}{\mathop{\mathrm{Cap}}}
\newcommand{\rlength}{\mathop{\rho\mathrm{-length}}}
\newcommand{\defeq}{\mathrel{\mathop:}=}
\newcommand{\cReff}{\mathop{\mathcal{R}_{eff}}}
\newcommand{\cRpeff}{\mathop{\mathcal{R}^{\prime}_{eff}}}
\newcommand{\cCeff}{\mathop{\mathcal{C}_{eff}}}
\title{Effective resistance on graphs and the Epidemic quasimetric}
  \author[Ericson, Poggi-Corradini, and Zhang]
         {Josh Ericson, Pietro Poggi-Corradini, and Hainan Zhang\\
         Department of Mathematics, Cardwell Hall, Kansas State University,
Manhattan, KS 66506, USA }
\thanks{The first and last authors completed this project while undergraduate students at Kansas State University. The second author wants to thank the Center for Engagement and Community
Development at Kansas State University, the Department of Electrical
and Computer Engineering at Kansas State University, and the
I-center in the Department of Mathematics at Kansas State University,
for their generous support of this research. This project was supported in part by NSF grant n. 1201427}
\date{\today}     
\begin{document}
\begin{abstract}
We introduce the epidemic quasimetric on graphs and study its behavior
with respect to clustering techniques. In
particular we compare its behavior to known objects such as the graph
distance, effective resistance, and modulus of curve families.
\end{abstract}

\maketitle
\baselineskip=18pt

\section{Introduction} 

This study was initiated by the need to analyze real world data collected in the rural town of Chanute, Kansas\footnote{This was a joint project of the first author with Prof. C.~Scoglio in the Dept. of Electrical and Computing Engineering and Prof. W.~Schumm in the Department of Family Studies, at Kansas State University}. The goal was to study and simulate potential epidemic outbreaks. From the survey, a contact network was constructed representing the sampled population and their potential relationships. Mathematically, this is just a graph where the vertices represent people and the edges represent possible interactions. In this paper, we introduce a new geometric quantity, the epidemic quasimetric, which we study and relate to more classical quantities, such as effective resistance.

One of the simplest geometric object that is used to study finite graphs is the ``graph metric''. Namely, the graph metric measures the distance between two nodes $a$ and $b$ by computing the minimal number of edges that must be traversed (`hops') to go from $a$ to $b$. 

Epidemics, on the other hand, can be modeled to begin at one node, then spread to all the neighbors, and then to all the neighbors' neighbors, etc...The possible damage of the epidemic spreads as a circular wave. We use this dynamic to assign to every pair of nodes of a finite graph a number, which we call the``epidemic quasimetric''. To compute the epidemic quasimetric between $a$ and $b$ we expand the range of an epidemic started at $a$ until $b$ is affected and compute the number of edges that became involved in the process. Then we do the same interchanging $a$ and $b$, and we add the two numbers thus obtained. This is fairly easy to compute numerically and we describe the routine we implemented in Matlab in Section \ref{sec:epidd}.

Part of the inspiration for considering the epidemic quasimetric came from reading \cite{semmes1993}\footnote{Diego Maldonado pointed out that a similar concept had been introduced earlier in \cite{macias-segovia1979}} where a similar quantity is introduced in order to study bi-Lipschitz embeddings of metric spaces. The hope is that the epidemic quasimetric contains geometric information that allows to view the graph under a new light. To partially confirm this intuition, we experimented with the epidemic quasimetric and showed how it can be used to obtain a pretty accurate cut of the famous Karate Club graph into ``natural'' communities, see Section \ref{sec:numerical}. In this direction, aside for clustering techniques, the epidemic quasimetric could be also be useful in sparsification techniques.

Our second goal is to compare the notion of epidemic quasimetric to the more classical notion of effective conductance, when the graph is viewed as an electrical network. Effective conductance has also been used in the literature to study graphs from the point of view of  community detection and sparsification. Therefore, such a comparison gives us hope that the epidemic quasimetric can also be used effectively to study graphs while being relatively simple to compute numerically. 

The paper begins with some preliminaries and notations about graphs; then  in Section \ref{sec:epidd} we define the epidemic quasimetric and state our goal to relate it to effective resistance. Thereafter we review the theory of random walks on finite graphs in Section \ref{sec:markov}, its connection to electrical networks, and the notion of effective conductance in Section \ref{sec:electrical}. Some references for these sections are \cite{doyle-snell1984}, \cite{peres2009}, and \cite{grimmett2010}.

Then, in Section \ref{sec:capmod}, we introduce two more concepts drawn from modern geometric function theory. Namely the notions of capacity and modulus of families of curves, see \cite{ahlfors1973}. In Theorem \ref{thm:modcap} we show that all of these concepts coincide with the notion of effective conductance (the method of Lagrange Multipliers turned out to be useful in this context). Moreover, in Proposition \ref{prop:upperbound} we exploit the definition of modulus to obtain a comparison between modulus and epidemic quasimetric. Thus we get an estimate for the epidemic quasimetric in terms of  effective conductance. 

Finally, in Section \ref{sec:numerical} we describe our numerical computations and experiments.

We begin with some preliminaries on elementary graph theory.

\section{Graphs}\label{sec:graphs}

\subsection{Notation and generalities}\label{ssec:graphs}
We will restrict our study to simple, finite, connected graphs.
Let $G= (V,E)$ be a {\em graph} with vertex-set $V$ and edge-set $E$.
We say that $G$ is {\em simple} if there is at most one undirected edge between
any two distinct vertices, and it is {\em finite} if the vertex set has
cardinality $|V|=N\in \N$. In this case, the edge-set $E$ can be
thought of as a subset of ${V\choose 2}$, the set of all unordered
pairs from $V$. Therefore the cardinality of $E$ is $M=|E|\leq
{N\choose 2}= (N/2) (N-1)$.

We say that two vertices $x,y$ are {\em neighbors} and write $x\sim y$
if $\{x,y \}\in E$. The graph is {\em connected} if for any two vertices
$a,b\in V$ there is a chain of vertices $x_{0}=a,x_{1},\dots,x_{n}=b$,
so that $x_{j}\sim x_{j+1}$ for $j=0,\dots, n-1$. It is known that
connected graphs must satisfy $|E|\geq N-1$ (induction).

Given a subset of vertices $V^{\prime}\subset V$, we let $E
(V^{\prime})\subset E$ be all the edges of $G$ that connect pairs of
vertices in $V^{\prime}$. With this notation $G (V^{\prime})=
(V^{\prime}, E (V^{\prime}))$ is a simple graph which we call the 
{\em subgraph induced by $V^{\prime}$}. More generally,    
a {\em subgraph} of $G$ is a graph $G^{\prime}= (V^{\prime},
E^{\prime})$ such that $V^{\prime}\subset V$ and $E^{\prime }\subset E
(V^{\prime})$.

The number of edges that are incident at a vertex $x$ is called the
{\em degree} of $x$ and we write $d (x)$. Since every edge is incident
at two distinct vertices, it contributes to two degrees. Therefore
\[
\sum_{x\in V}d (x)=2|E|.
\]
This identity is sometimes referred to as the ``Handshake Lemma''.
It says that instead of counting edges, one can add degrees,
i.e., switch to $d (x)$ which is a function defined on $V$.

For instance, {\em the volume} of a subgraph $H= (V (H), E (H))$ of
$G$ can be defined as $|E (H)|$, i.e.,
the number of edges of $H$; or as half the sum of the $H$-degrees over
the vertices of $H$: 
\[
\frac{1}{2}\sum_{x\in V (H)}d_{H} (x). 
\]

We say that $\ga$ is a {\em curve} in $G$ if $\ga$ is a connected
subgraph of $G$. This is not a very common way of defining curves in graph theory, but it makes sense from the point of view of the function-theory inspired concepts that we will introduce later when we talk about modulus of curve families. 

The graph $G$ is {\em weighted} if there is a weight function $W:E\rightarrow[0,+\infty)$ defined on the edges. The unweighted graph is recovered by setting $W_0(e)= 1$, $\forall e\in E$.

Given a curve $\ga$ in $G$,  it is natural to define
its {\em graph-length} to be the total number of edges in $\ga$, i.e.,
\begin{equation}\label{eq:graphlengthw}
\length_G(\ga)\defeq \sum_{e\in E(\ga)}W_0(e).
\end{equation}

\subsection{The graph distance}\label{ssec:grdist}

A function of two variables $d(x,y)\geq 0$ on a space $X$ is called a
{\em metric} or a {\em distance}, if it is symmetric, $d(x,y)=d(y,x)$;
non-degenerate, $d(x,y)=0$ iff $x=y$; and satisfies the {\em triangle
inequality}, $d(x,y)\leq d(x,z)+d(z,y)$, whenever $x,y,z\in X$.  

On a connected graph $G$  the {\em graph distance} $d_G(x,y)$ is
defined as the shortest graph-length of a curve connecting $x$ to $y$. 
\begin{equation}\label{eq:graphdist}
d_G(x,y)\defeq\min_{\ga:x\leadsto y} \length_G(\ga),
\end{equation}
where $\ga:x\leadsto y$ means that $\ga$ is a curve connecting $x$ to $y$. We leave the verification that $d_G$ is a metric to the reader.

The {\em diameter} of the graph $G$ is 
\[
\diam(G)=\max_{x,y\in V} d_G(x,y).
\]

The {\em metric ball} centered at a vertex $x$ and of radius $r$ is
\[
\cB (x,n)\defeq \{y\in V: d_{G} (x,y)\leq r \}.
\]

\section{The epidemic quasimetric}\label{sec:epidd}

Given two nodes $x$ and $y$, we define the epidemic quasimetric between
them to be the size of the part of the graph that would be affected
(the potential damage) if an epidemic started at $x$ and reached $y$,
or vice versa. In formulas, we consider all the vertices in 
\[
\cB (x,d_{G} (x,y))
\]
and form the induced subgraph of $G$ which we call $\Omega (x,d_{G} (x,y))$. 
We then compute the volume $|\Omega (x,d_{G} (x,y))|$ as in Section \ref{ssec:graphs}, by
counting the
number of edges. So we define the {\em epidemic quasimetric} between $x$
and $y$ to be
\begin{equation}\label{eq:epidemic}
\epdist (x,y) \defeq |\Omega (x,d_{G} (x,y))|+|\Omega (y,d_{G} (x,y))|.
\end{equation}

The epidemic quasimetric is not a distance in the mathematical sense. For
instance, the triangle inequality can fail
as badly as possible, as the following example shows.

\begin{example}\label{ex:trineq}
In Figure \ref{fig:trineq},
\[
\epdist (x_{1},x_{2})=3 \qquad \mbox{and}\qquad \epdist (x_{2},x_{3})= 4
\]
while
\[
\epdist (x_{1},x_{3})=N+5.
\]
So the triangle inequality in this case can be made to fail as badly
as needed by letting $N$ increase.
\begin{figure}[h]
\includegraphics{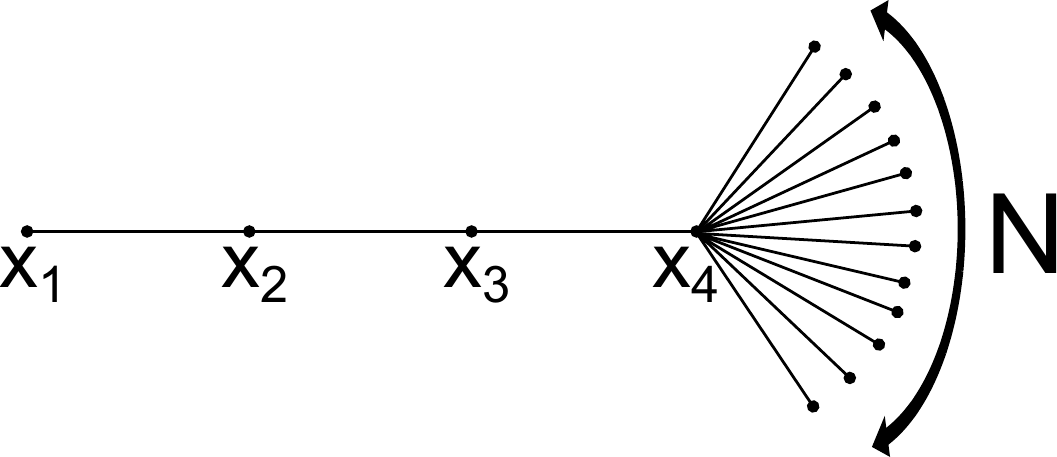}
\caption{Failure of the triangle inequality}\label{fig:trineq}
\end{figure}
\end{example}

As described in Section \ref{sec:numerical}, the epidemic quasimetric seems to carry useful geometric information about a graph and in our experiments it appears to behave well with respect to simple clustering algorithms. For the rest of the paper, our intent is to present a comprehensive survey of effective resistance and to answer the following question.

\begin{question}
Is there a connection between the epidemic quasimetric and effective resistance?
\end{question}

We begin in the next two sections by surveying Markov
chains and electrical networks. Most of this material can be found in
\cite{peres2009}, \cite{grimmett2010}, \cite{doyle-snell1984}.

\section{Markov Chains} \label{sec:markov}

A Markov chain is comprised of a finite set $S$ (the state space)
and a probability distribution on the state space which can be
represented in terms of a {\em transition matrix} $\{ P(i,j) \} = P$ whose
entries correspond to the probability of being at state $j$ at time
$n+1$ given that you were at state $i$ at time $n$. $P$ being a
transition matrix means that we must have $\sum_{j} P(i,j) =1$ (Row
sums add up to $1$.)  

This determines a process, i.e., a sequence of random variables 
$X_n=\Omega \rightarrow S$, with the property that $\mathbb{P}
(X_{n+1}=y | X_n = x) = p(x,y)$. Note that $P(x,y)$ is independent of
$n$.

\example{A random walk on a graph is an example of a Markov chain.
Consider a finite, simple graph $G=(V,E) $ with vertices $V=\{1, 2,...,
N\} $ and edges $E$. For $x \in V$, $d_x$
indicates the degree or local index of $x$. 
We've seen that, by the ``hand-shake'' lemma,  $\sum_{x \in V} d_x =2 |E|$.

\noindent Define

\begin{equation}\label{eq:grphtrans}
   P (x,y) \defeq  \left\{
     \begin{array}{lr}
       \frac{1}{d_{x}} &  x \sim y\\
       0 &  \mbox{ else. }
     \end{array}
   \right.
\end{equation}

\vspace*{.2in}

\noindent Note that $\sum_{y \in V}P(x,y)=\sum_{y \sim x} 1/d_x = (1/d_x)
\sum_{y \sim x}1 = 1$.} 
 
\subsection{Matrices act on column vectors} Given $N$ states, $\{P(x,y)\}=P$ is an $N
\times N$ matrix. Here $N \times 1$ vectors correspond to functions
$f: S \rightarrow \mathbb{R}$ on the state space. Then $P$ acts on
functions as follows: \[g(x)=(Pf)(x)=\sum_{y \in S} P(x,y)f(y).\]
Probabilistically, \[g(x)= \sum_{y \in S} f(y)
\mathbb{P}(X_{1}=y|X_0=x)=\mathbb{E}_{X_0=x}(f(X_1)),\] which is the
average or expected value of $f$ evaluated on the process at time 1.

\noindent On a graph: \[g(x)=\frac{1}{d_x} \sum_{y\sim x}f(y).\] So $g$ is
obtained from $f$ by defining $g (x)$ to be the average of the values
of $f$ over all the 
neighbors of node $x$.

\definition{Whenever $x\in V$ and $(Pf)(x)=f(x)$ we say that $f$ is a
{\em harmonic} function on the graph at $x$.} 

\example[\bf Gambler's Ruin]{Consider six nodes $\{0,1,2,3,4,5\}$
representing the dollar amounts held by a gambler. After each bet the
gambler either wins or loses a dollar with equal probability. The
gambler will walk away whenever his fortune is $0$ (ruin), or $5$
(predetermined goal). We represent this with a transition matrix $P$,
which is  a $6\times 6$ matrix in this case. 

We would like to know the probability of reaching $5$ before $0$, assuming
that we start our random walk at node $3$. This is an example of a
 ``hitting probability'', what in complex analysis would be called a
 ``Harmonic Measure'' problem. 

\noindent Let \[h(x)= \bP(\mbox{$X_n$ hits $5$ before $0$} |
X_0=x)=\bP_x(\mbox{$X_n$ hits $5$ before $0$}),\]  
where $\bP_x$ is probability conditioned on $\{X_0=x\}$. 

\noindent We want to compute $h(3)$. We call  $B=\{0, 5\}$ {\em boundary
points} while $I=\{1, 2, 3, 4\}$ are called {\em interior points}. 
}
For $x\in
I$, we can condition on the first step; this is known as ``first-step
analysis''. For brevity we write $A\defeq \{\mbox{$X_n$ hits $5$ before
$0$}\}$:
\begin{eqnarray*}
h(x) & = & \bP_x(A) \\
       & = & \bP_x(A | X_1=x-1)\bP_x(X_1=x-1)+ \bP_x(A| X_1=x+1)\bP_x(X_1=x+1)\\
      & = & \frac{1}{2} h(x-1)+\frac{1}{2}h(x+1)\end{eqnarray*} since
$P(x,y)=1/2$ if $y=x-1$ or $x+1$, and $0$ otherwise. The function $h$
is thus harmonic for $P$ at each interior point. Also $h$
has boundary values $h(0)=0$ and $h(5)=1$.

\begin{fact}[Maximum Principle]\label{fact:maxpr}
A harmonic function achieves its maximum
value $M$ and minimum value $m$ on the boundary.
\end{fact}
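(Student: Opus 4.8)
The plan is to prove this by a propagation argument (a discrete "strong maximum principle"), using the averaging property of harmonic functions together with the connectedness of $G$. Set $M \defeq \max_{x\in V} h(x)$, which exists since $V$ is finite. It suffices to show that the level set $S \defeq \{x\in V : h(x)=M\}$ meets the boundary $B$, for then $\max_{x\in B} h(x) = M = \max_{x\in V} h(x)$, which is what the Fact asserts. The corresponding statement for the minimum $m$ will follow immediately by applying the result to $-h$, which is again harmonic at every interior vertex because the averaging operator $f\mapsto Pf$ is linear.

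First I would pick any $x_0\in S$ (nonempty since $V$ is finite and $h$ attains its max). If $x_0\in B$ we are done, so suppose $x_0\in I$. Since $h$ is harmonic at $x_0$, the definition gives
\[
M = h(x_0) = (Pf)(x_0) = \frac{1}{d_{x_0}}\sum_{y\sim x_0} h(y).
\]
Because $h(y)\le M$ for every vertex $y$, an average of the $d_{x_0}$ numbers $h(y)$ can equal $M$ only if $h(y)=M$ for each neighbor $y$ of $x_0$. Thus every neighbor of $x_0$ also lies in $S$.

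Next I would iterate this along neighbor-chains. Assume for contradiction that $S\cap B=\emptyset$, i.e. $S\subseteq I$. Then the previous paragraph applies at \emph{every} vertex of $S$, so $S$ is closed under passing to neighbors: $x\in S$ and $y\sim x$ imply $y\in S$. Starting from $x_0$ and invoking connectedness of $G$ (any two vertices are joined by a chain $x_0=z_0\sim z_1\sim\cdots\sim z_n$), an induction on the length of the chain shows that every vertex of $G$ lies in $S$, hence $S=V$. In particular $B\subseteq S$, contradicting $S\cap B=\emptyset$, since the boundary $B$ is nonempty in the settings of interest (as in the Gambler's Ruin example). Therefore $S\cap B\neq\emptyset$, so the maximum is attained at a boundary vertex; the claim for the minimum follows from the same argument applied to $-h$.

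The one step I would be most careful about is precisely this propagation/connectedness step: one must observe that so long as the maximizing set $S$ avoids $B$ it consists solely of interior vertices, so harmonicity is genuinely available at each of them, and only then does connectedness force $S$ to exhaust $V$. If one prefers a direct argument without contradiction, the same idea works: follow a neighbor-chain from an interior maximizer $x_0$ toward the boundary, and note that the value $M$ is transmitted one vertex at a time along the chain until its first boundary vertex is reached, at which point $M$ is realized on $B$.
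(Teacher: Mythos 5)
Your argument is correct and is exactly the propagation (``oil spill'') idea the paper itself sketches: harmonicity forces the maximum value to spread from an interior maximizer to all its neighbors, and connectedness carries it to the boundary, with the minimum handled by applying the result to $-h$. You have simply made the paper's one-line idea rigorous, so no further comment is needed.
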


Idea: suppose $f$ attains its maximum at an interior point. By
harmonicity it must attain it at each neighboring vertex too and this
will propagate out as an oil spill all the way to the boundary.

\begin{claim}\label{cl:uniq}The problem of finding a harmonic function $h$  on $I$ with boundary values $h(0)=0$ and $h(5)=1$  has a unique solution.
\end{claim}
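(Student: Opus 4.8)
The plan is to prove uniqueness by using the Maximum Principle (Fact \ref{fact:maxpr}) on the difference of two solutions. Suppose $h_1$ and $h_2$ are both harmonic on the interior $I = \{1,2,3,4\}$ with the prescribed boundary values $h_i(0)=0$ and $h_i(5)=1$. Set $g \defeq h_1 - h_2$. Since harmonicity is a linear condition (the equation $(Pg)(x) = g(x)$ reduces to $g(x) = \tfrac12 g(x-1) + \tfrac12 g(x+1)$ at interior points), the function $g$ is harmonic on $I$. Moreover, $g$ has boundary values $g(0) = 0$ and $g(5) = 0$.

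Next I would apply the Maximum Principle to $g$: its maximum $M$ and minimum $m$ are both attained on the boundary $B = \{0,5\}$. But $g$ vanishes on all of $B$, so $M \le 0$ and $m \ge 0$, which forces $g \equiv 0$ on $I \cup B$. Hence $h_1 = h_2$, establishing uniqueness. (Existence, if one wants it here, follows either by exhibiting the explicit linear solution $h(x) = x/5$, or by a dimension-counting / linear-algebra argument: the map sending a function on $I$ to the tuple of its "defects" $\big((I-P)h\big)(x)$ for $x\in I$ together with the boundary constraints is a square linear system whose associated homogeneous system has only the trivial solution by the argument just given — but the claim as stated only asks for uniqueness.)

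The only subtle point — and the place where a careful reader might want more detail — is the justification of the Maximum Principle itself in this discrete setting, since the excerpt states it only as a "Fact" with an informal "oil spill" idea. If one wanted to be self-contained, the rigorous version is: if a harmonic function $g$ attains its maximum value $M$ at an interior vertex $x$, then from $g(x) = \tfrac12 g(x-1) + \tfrac12 g(x+1)$ and $g(x\pm 1) \le M$ we get equality, so $g(x-1) = g(x+1) = M$; iterating along the path $0,1,2,3,4,5$ propagates the value $M$ to a boundary vertex, so $M$ is attained on $B$ (and symmetrically for $m$). I expect this propagation argument to be the main thing to spell out, but it is routine; the structural heart of the proof — pass to the difference, use linearity, invoke the maximum principle on the now-zero boundary data — is immediate.
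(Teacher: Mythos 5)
Your proof is correct and follows the same route as the paper: take the difference of two solutions, note it is harmonic by linearity with zero boundary values, and apply the Maximum Principle (Fact \ref{fact:maxpr}) to conclude it vanishes identically. The extra remarks on justifying the discrete maximum principle and on existence are fine but not needed for the claim as stated.
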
 

\begin{proof}Suppose that $h$ and
$g$ are both harmonic at each interior point and that $h(0)=g(0)=0,
h(5)=g(5)=1.$ Let $f= h-g$. Then, by linearity, we have that
$$f(x)=\frac{1}2f(x-1)+\frac{1}2f(x+1)$$ with $f(0)=0$ and $f(5)=0.$
We get that $M = max(f)=0$ and $m = min(f)=0$ and the Maximum Principle
implies that $f\equiv 0$ or $g=h.$ 
\end{proof}

\begin{remark}\label{rem:dp}
The example of Gambler's Ruin above is part of a larger set of
problems. Given a subset $B$ of nodes that we will call ``boundary''
and a function $h_{B}$ defined only on $B$, it is always possible to
extend this function on the remaining nodes (the {\em interior
points}) so that the extension $h$ is harmonic on the interior points.
Claim \ref{cl:uniq}, shows that if a solution exists it is
unique. However, the existence is obtained by writing down a solution
explicitly. For this we must introduce the notion of {\em stopping
times}. Given a walker starting at some interior node $x$ the stopping
time $\tau_{B}$ is the first time the walker visits a node in
$B$. Since the walk is a random process, stopping times are random variables.
The solution to the boundary-value (or Dirichlet) problem is
\[
h (x)\defeq \bE_{x} (h_{B} (X_{\tau_{B}}))
\]
namely, the expected value of $h_{B}$ evaluated at the exit point of a
walk started at $x$.
\end{remark}

\subsection{Matrices act on row vectors}

On a finite graph a random walker either runs until it hits a given
set of boundary points, as in the Gambler's ruin example, or it
bounces around forever. In the latter case, we can ask what 
fraction of time does it spend at a node $x$? 

Intuitively we want to define $\pi$ to be the {\em stable distribution} if
$\sum_{x\in V} \pi(x)=1$ and $\pi (x)$ denotes the probability of
finding the random walker at $x$ in the long run. This however already
implies that the long run stabilizes.

Let $\mu_0(x)$ denote the initial distribution for $X_0$ and
similarily $\mu_1(x)$ the distribution of $X_1$, after one 'step'. 
Then, conditioning on the previous location we get: 
\[
\mu_1(x)= \sum_y \mu_0 (y) \mathbb{P}(X_1=x|X_0=y) = \sum_y
\mu_0 (y) P(y,x).
\]
 
If we think of $\mu_0 (x)$ as a row vector then we can rewrite this in
matrix multiplication form as
$\mu_1=\mu_0 P$, $\mu_2 = \mu_1 P = \mu_0 P^2$, \dots . The entries of $P^2$
will be of the form 
\[
(P^{2})(x,y)= \sum_z P(x,z)P(z,y) =
\mathbb{P}(X_2=y|X_0=x).
\]

In general, we have that $\mu_n = \mu_0 P^n$, where $P^{n}$ is the
$n$th-power of the matrix $P$.

If, as $n$ goes to infinity,
$\lim_{n \rightarrow \infty} \mu_n = \pi$, then $\pi$ is a unique fixed-point
for $P$. That's because for large $n$
\[
\pi    \approx\mu_{n+1}=\mu_{n}P \approx \pi P
\]
We say that $\pi$ is a  {\em stable distribution} for $P$ if $\pi P=\pi$.

For a Markov chain on a 
finite state space there are very mild conditions (irreducibility and
aperiodicity) that 
guarantee the existence
and uniqueness of a stable distribution $\pi$, as well as the
convergence $\lim_{n\rightarrow \infty}\mu_0 P^{n}=\pi$ independently
of $\mu_0$. In such cases, $\pi (x)>0$ at every $x\in S$ and $1/
\pi(x)$ equals the expected return time to $x$ (in formulas
$\mathbb{E}_x (\tau_{x}^{+})$). Namely, $\pi$ is inversely
proportional to the average amount of time it takes for the random walker to
find its way back to the starting node. The easier it is to come back,
the larger $\pi$ is, and the larger the proportion of time the random
walker spends at $x$ in the long run.

\subsection{Reversible chains}\label{ssec:revchain}

The chains we will use in the sequel will have one extra property: 
reversibility.
\definition{The Markov chain $P$ is {\em reversible} if there is a distribution
$\pi$ such that 
\begin{equation}\label{eq:reverse}
\pi(x)P(x,y) = \pi(y)P(y,x)\qquad 
\forall x, y \in V.
\end{equation}
} 

If $P$ is reversible, then the distribution $\pi$ is stable, as
the following shows:
\[
\sum_{x}\pi (x)P(x,y)=\sum_{x}\pi (y)P(y,x)=\pi(y)\sum_{x}P(y,x)=\pi (y).
\]

The reversibility condition is very closely related to the notion of
``symmetric'' or ``self-adjoint'' matrices. 
In fact, if we let $A (x,y)=\pi (x) P (x,y)$,
then (\ref{eq:reverse}) is $A (x,y)=A (y,x)$.
In matrix notation $A=DP$, where $D$ is a diagonal matrix with entries
$\pi (x)$. So
standard results in linear algebra tell us that $A$ and thus $P$, are
linearizable to a diagonal matrix of real eigenvalues. Moreover, the
action of $P$ on row vectors is the action of the adjoint (or
transpose) $P^{T}$ on column vectors and $P^{T}$ shares the
same eingenvalues as $P$ and $A$. The difference lies in the
eigenvectors (or eigenfunctions), but there are simple formulas
involving $\pi$ to go from one set to the other.

The fact that the rows of $P$ sum to $1$ implies that the
eigenvalues of $P$ are between $-1$ and $1$. To see this note that
\begin{eqnarray*}
|Pf (x)| 
& = & |\sum_{y} P(x,y)f (y)| \\
& \leq  & (\max_{y} |f (y)|)\sum_{y}P (x,y)=\max_{y} |f (y)|
\end{eqnarray*}
Suppose that
$Pf=\lambda f$. Then, $|\lambda| |f (x)|\leq  \max_{y}|f (y)|$ for all
$x$'s. Hence, $|\lambda|\leq 1$.

Finally, the fact that the rows of $P$ sum to one also imply
straightforwardly that $1$ is an eigenvalue of $P$ for constant eigenvectors.

\begin{example}[Random walks on weighted graphs]
Let $ G = (V,E;W) $ be a weighted graph.
The degree of a vertex is usually the number of neighbors
of $x$, but for a weighted graph it can also be defined as
\[
\deg (x)\defeq \sum_{y\backsim x}W (x,y)
\]
Then a (weighted) random walk on $G$ is defined to be the Markov chain
with state space $V$ and transition matrix 
   $$P(x,y) = \left\{ \begin{array}{rl}\frac{W (x,y)}{\deg(x)} &\mbox{ if $y
\backsim x,$} \\ 0 &\mbox{ otherwise.}\end{array} \right.$$ 
 In this situation the probability distribution defined at each $x\in
V$ by:
   $$ \pi(x) \defeq  \frac{\deg(x)}{Z}\qquad \mbox{where $Z=\displaystyle\sum\limits_{z\in V}\deg(z)$}$$
will satisfy
\begin{eqnarray*}
\pi(x)P(x,y) & = &  \frac{\deg(x)}{Z}
\frac{W (x,y)}{\deg(x)}\\
& = & \frac{\deg(y)}{Z}\frac{W (y,x)}{\deg(y)}\\
& = &  \pi(y)P(y,x).
\end{eqnarray*}
In particular, $P$ is reversible in this case.
\end{example}

\section{Electrical Networks and Effective Resistance} \label{sec:electrical}

It is often useful when studying graphs and random walks on graphs to
think of them as electrical networks. Each edge $e$ then acquires a
weight $W (e)=C(e)$ which in this instance plays the role of {\em conductance}.
One recalls from high school physics that the electrical current $I$
through a connection is related to the potential difference $V$ via
the formula
\[
V=RI \qquad \mbox{and}\qquad I=C V
\]
where $R=1/C$ is the {\em resistance} and $C$ the conductance.

If instead of a simple connection one is looking at a series of
connections then simple rules allow one to say that the system behaves
as if there was only one connection but with an appropriately modified
resistance. This ``virtual'' resistance is what we call the {\sf
effective resistance} between two nodes.

\subsection{Flows and currents}

In order to make this more precise we need to introduce some notation.
Given two nodes $a$ and $b$ we consider a unit current that is
allowed to pass through a connected graph with the 'source' at
$a \in V$ and the 'sink' at $b \in V$. In order to effect this passage
a certain voltage difference must be applied at $a$ and $b$, and a
corresponding voltage potential will arise at every node $x$ in the graph.
Let $v(x)$ denote the voltage at $x \in V$.

The electric current that runs through the graph is an example of
a flow. Mathematically, {\em a flow} is an assignment of a number
$j_{xy}$ (representing its intensity) to every
directed edge
$(x,y)$, that has the property of being {\em  antisymmetric}, i.e.,
$j (x,y) =-j (y,x)$.
Given two nodes $a$ and $b$, we say $j$ is {\em a flow from $a$ to
$b$} if the following property is satisfied at every node $x\neq a,b$.

\noindent\underline{\bf Kirchoff's Node Law:} {\em The flow into a
vertex $x$ is equal to the flow out of $x$. In other words, the divergence
$\Div_{j}(x)\defeq\sum_{y \sim x} j(x,y)$ is equal to zero at every $x\neq
a,b$.}

Moereover, we also demand that $\Div_{j}(a) \geq 0$ (and thus $\Div_{j}(b)
\leq 0$), since it's a flow from $a$ to $b$. The quantity $\Div_{j} (a)$
is the {\em strength} of the flow $j$ from $a$ to $b$.

To compute effective
resistance we will assume that a unit flow is entering the network at $a$ and exiting at
$b$, so $\Div_{j} (a)=-\Div_{j} (b)=1$ and $\Div_{j} (x)=0$ for $x\neq a,b$.

A flow from $a$ to $b$ is furthermore a {\em current flow}, if it also
satisfies Ohm's 
law below. In this case we use the notation $i(x,y)$. 

\noindent\underline{\bf Ohm's Law:} {\em There is a potential function $v$ defined
on the nodes such that for every oriented edge $(x,y)$, $R (x,y) i (x,y) =
v(y)-v(x)$ where $R (x,y)$ is the resistance of the edge $(x,y)$.}

Whenever a
flow can be expressed 
as the edge difference of a potential function defined on the nodes, as
is the case for current, then around any cycle $x_{1}\sim\dots\sim
x_{n}\sim x_{n+1}=x_{1}$ the following holds necessarily:
\[
\sum_{k=1}^{n}R (x_{k},x_{k+1}) i(x_{k},x_{k+1})=0.
\]
This is known as \underline{\bf Kirchoff's Potential Law} and it turns
out to be equivalent to the existence of a potential.

Combining Kirchoff's Node Law and Ohm's Law at $x\neq a,b$, we get:
\begin{eqnarray*}
0 & = & \sum_{y \sim x} i(x,y)\\
& = & \sum_{y \sim x} \frac{v (y)-v (x)}{R (x,y)}\\
& = & \sum_{y \sim x} C (x,y) v(y)-C_{x}v (x),
\end{eqnarray*}
where $C (x,y)=1/R (x,y)$ is the conductance of the edge $(x,y)$ and
$C_{x}=\sum_{y\sim x}C (x,y)$ is the ``degree'' of node $x$ when using
conductance to weight each edge. In particular, we have that
\[
v (x)=\frac{1}{C_{x}}\sum_{y \sim x} C (x,y) v(y)
\]
namely the potential voltage function $v$ must necessarily be harmonic
with respect to the weighted graph.

That a current flow satisfying all of these laws and requirements
exists is a physical fact, however mathematically one has to prove its
existence and uniqueness. 

Proving uniqueness is fairly easy because flows that satisfy the two
Kirchoff laws mentioned above satisfy the
Superposition Principle, i.e. given two such flows $i_{1}$ and
$i_{2}$, then $j=i_{1}-i_{2}$ will also satisfy the same laws.

The existence proof is more challenging and its resolution
surprising. A flow can actually be constructed explicitly that
satisfies all the requirements using the concept of spanning trees.

Probabilistically, the resulting current $i(x,y)$ along an edge
$(x,y)$ can be shown to equal the expected number of (net) times that the random
walker (on the graph weighted by the conductances) crosses from $x$ to $y$.

\subsection{Effective resistance}

Given two nodes $a$ and $b$ consider a unit current flow entering at
$a$ and exiting the network at $b$. This flow exists and is unique, as
explained above, once the Neumann conditions, namely the entering and
exiting flow, is fixed at $a$ and
$b$. Moreover, this current determines a unique voltage potential $v$ at each
node (up to a constant). The absolute value of the voltage drop
between $a$ and $b$ is what we call the {\em effective resistance between
$a$ and $b$}. In formulas:
\begin{equation}\label{eq:effres}
\cReff (a,b)\defeq |v (a)-v (b)|=v (b)-v (a).
\end{equation}
\subsection{Effective resistance and escape probabilities}
Write 
\begin{equation}\label{eq:loccond}
C (x)\defeq\sum_{y\sim x}C(x,y),
\end{equation}
for the {\em local conductance} at a node $x$. Then the random walk on the weighted graph is the Markov chain with transition probabilities 
\begin{equation}\label{eq:condweights}
P(x,y)\defeq\frac{C(x,y)}{C(x)}
\end{equation}
whenever $y\sim x$.

Effective resistance, or more appropriately, {\em effective
conductance},
\begin{equation}\label{eq:effcond}
\cCeff (a,b)\defeq \frac{1}{\cReff (a,b)}
\end{equation}
is related to the the probability that a random walker starting at $a$
visits $b$ before returning to $a$. In symbols, recall the notion of
stopping time. Assuming that the random walk is at $a$ at time $0$, we write
$\tau_{z}$ for the first time it visits node $z$ and $\tau^{+}_{a}$
for the first time the walker revisits $a$ after time $1$. 
We are interested in $\bP_{a} (\tau_{z}<\tau^{+}_{a})$, the probability
that the random walk starting at $a$ visits $z$ before returning to $a$.

\begin{proposition}\label{prop:escprob}
For any $a,b\in V$,
\[
C (a)\bP_{a} (\tau_{b}<\tau^{+}_{a})=\cCeff (a,b).
\]
\end{proposition}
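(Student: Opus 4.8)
The plan is to relate the escape probability to the voltage function of a unit current flow by exploiting the harmonicity and boundary behavior established earlier in this section. First I would set up a specific harmonic function: let $v$ be the voltage potential associated with a current flow that puts current into the network at $a$ and extracts it at $b$, but rather than normalizing the flow to be a \emph{unit} flow I would normalize the voltages so that $v(b)=0$ and $v(a)=1$. Since $v$ is harmonic at every node $x\neq a,b$ (as shown in the display preceding the definition of effective resistance) and since $v$ is uniquely determined by its boundary values on $\{a,b\}$ by the uniqueness half of the Dirichlet problem (Claim~\ref{cl:uniq} and Remark~\ref{rem:dp}), the stopping-time representation of Remark~\ref{rem:dp} gives
\[
v(x)=\bE_x\bigl(v(X_{\tau_B})\bigr)=\bP_x(\tau_a<\tau_b),
\]
where $B=\{a,b\}$. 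In particular $v(x)$ is exactly the probability that a walker started at $x$ reaches $a$ before $b$.

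Next I would compute the total current leaving $a$ in two ways. On one hand, by Ohm's law the net current out of $a$ is
\[
\Div_i(a)=\sum_{y\sim a} i(a,y)=\sum_{y\sim a} C(a,y)\bigl(v(a)-v(y)\bigr)=\sum_{y\sim a}C(a,y)\bigl(1-v(y)\bigr).
\]
On the other hand, using the probabilistic interpretation of $v$, I would do a first-step analysis for a walker \emph{started at $a$}: conditioning on the first step, $\bP_a(\tau_b<\tau_a^+)=\sum_{y\sim a}P(a,y)\bP_y(\tau_b<\tau_a)=\sum_{y\sim a}\frac{C(a,y)}{C(a)}\bigl(1-v(y)\bigr)$, because once the walker has moved to a neighbor $y$, the event of hitting $b$ before $a$ has probability $1-v(y)$. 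Comparing the two expressions yields
\[
\Div_i(a)=C(a)\,\bP_a(\tau_b<\tau_a^+).
\]

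Finally I would connect this to effective conductance. The flow $i$ built above carries total strength $\Div_i(a)$, and dividing $i$ by this number produces a \emph{unit} current flow whose associated voltage drop between $a$ and $b$ is, by definition~\eqref{eq:effres}, the effective resistance $\cReff(a,b)$. Since scaling the flow scales the voltages by the same factor, the unit-flow voltage drop equals $\bigl(v(a)-v(b)\bigr)/\Div_i(a)=1/\Div_i(a)$. Hence $\cReff(a,b)=1/\Div_i(a)$, i.e. $\cCeff(a,b)=\Div_i(a)=C(a)\,\bP_a(\tau_b<\tau_a^+)$, which is the claim. The step I expect to require the most care is the first-step analysis identity $\bP_y(\tau_b<\tau_a)=1-v(y)$ together with the legitimacy of conditioning on the first step when starting \emph{at} $a$ (so that $\tau_a^+$, not $\tau_a$, is the relevant return time); one must check that after the first step the relevant event is genuinely $\{\tau_b<\tau_a\}$ and that the walker's position $y$ is an interior point or one of the boundary points, with the boundary cases ($y=a$ impossible since the graph is simple, $y=b$ giving $v(y)=0$) handled consistently.
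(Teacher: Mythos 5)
Your proof is correct and follows essentially the same route as the paper: both identify the hitting probability with the suitably normalized voltage via harmonicity and uniqueness of the Dirichlet problem on $\{a,b\}$, and then combine a first-step analysis at $a$ with Ohm's law and Kirchhoff's node law. The only difference is bookkeeping: you fix the potential to be $1$ at $a$ and $0$ at $b$ and rescale the resulting current to unit strength at the end, whereas the paper starts from the unit current flow and rescales its voltage by $\cCeff(a,b)$, so the two arguments are dual normalizations of the same computation.
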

\begin{proof}
The proof is a beautiful application of the maximum principle for
harmonic functions (Fact \ref{fact:maxpr}). Consider $B=\{a,b\}$ to be the
boundary and $\Omega=V\setminus \{a,b \}$ the interior. We want to find a
harmonic function on $\Omega$ which takes the value $0$ at $a$ and the
value $1$ at $b$. By the maximum principle, the solution to this
problem is unique. We now produce two such solutions. 

The first one
is,
\[
h (x)\defeq \bP_{x} (\tau_{b}<\tau^{+}_{a}).
\]
Harmonicity can be checked by conditioning on the first step.

The second solution is given by normalizing the voltage function
$v(x)$ which is required in order to have one unit of current flow in
at $a$ and out at $b$. So let 
\[
g(x)\defeq \frac{v(x)-v(a)}{v(b)-v(a)}=\cCeff(a,b)(v(x)-v(a))
\]

By uniqueness, $h=g$. 

Now, by conditional probability,
\[
\begin{array}{lll}
\bP_{a} (\tau_{b}<\tau^{+}_{a}) & =  \sum_{x\in V}P(a,x))\bP_{x}
(\tau_{b}<\tau^{+}_{a}) & \mbox{(first-step analysis)}\\ 
&&\\
& =  \sum_{x\sim a} \frac{c(a,x)}{c(a)}\cCeff(a,b)(v(x)-v(a)) &
\mbox{(by  (\ref{eq:condweights}))}\\ 
&&\\
& =  \frac{\cCeff(a,b)}{C(a)}\sum_{x\sim a} i(a,x) & \mbox{(Ohm's Law)}\\
&&\\
& =  \frac{\cCeff(a,b)}{C(a)}\Div_{i}(a)=\frac{\cCeff(a,b)}{C(a)} &
\mbox{(by Kirchoff's node law)} 
\end{array}
\]
\end{proof}

\begin{remark}
The probability $p=\bP_{a} (\tau_{b}<\tau^{+}_{a})$ is also known as the {\em escape probability}.
With probability $1-p$ escape fails and the walker returns to $a$
before visiting $b$, at which point another identical walker starts
out for another attempt. This is akin to flipping a biased coin that
has probability $p$ of success (Heads). In particular, the number of
tosses $N$ required to achieve success is known to be distributed with
the geometric distribution: 
\[
\bP(N=k)=p(1-p)^{k-1}\qquad\mbox{for $k=1,2,3,\dots$}
\]
A calculation using the geometric series shows that the expected number of tosses $\bE(N)$ equals $1/p$.
In our context $N$ is the {\em number of visits to $a$ before
escaping through $b$} (where we count $t=0$ as a visit) and its
expectation is known as the {\em Green's function} of the random walk
started at $a$ and stopped at $b$. More generally, $G_b(a,c)$ is the
expected number of visits to $c$ for the walk started at $a$,
before it is stopped at $b$. It follows from this discussion and
Proposition \ref{prop:escprob} that $G_b(a,a)=C(a)\cReff(a,b)$. 
\end{remark}

\subsection{Rayleigh's monotonicity and energy}

The {\em energy} of a flow $j$ is
\[
\Energy (j)\defeq \sum_{e\in E} R(e) (j (e))^{2}
\]
Notice that for each edge $e\in E$, the quantity $j (e)$ is defined up
to a sign change. Therefore, the square $(j (e))^{2}$ is well-defined.

\begin{proposition}\label{prop:innerprod}
Consider the unit current flow $i$ from $a$ to $b$ and its
corresponding potential $v$ defined on the nodes. Given an arbitrary flow
$k$ from $a$ to $b$,
\[
\sum_{e\in E}R (e)i (e)k (e) =(v (b)-v (a))\Div_{k}(a).
\]
\end{proposition}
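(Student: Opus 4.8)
The plan is to expand the left-hand side using Ohm's law to convert the resistance factor into a potential difference along each edge, then reorganize the sum over directed edges into a sum over vertices where the divergence of $k$ appears naturally. Concretely, I would first write $R(e)i(e) = v(y)-v(x)$ for an edge $e$ oriented as $(x,y)$, which is exactly Ohm's Law as stated in the excerpt. Substituting this into $\sum_{e\in E} R(e) i(e) k(e)$ and choosing a consistent orientation for each edge, the sum becomes $\sum_{(x,y)} (v(y)-v(x)) k(x,y)$. Because both $v(y)-v(x)$ and $k(x,y)$ are antisymmetric under swapping $x$ and $y$, the product is symmetric, so this directed sum can be rewritten as a sum over ordered pairs: $\frac{1}{2}\sum_{x}\sum_{y\sim x}(v(y)-v(x))k(x,y)$, or more conveniently we can just sum once over each directed edge in a fixed orientation and expand.

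The key step is the "summation by parts" (discrete Green's identity): regrouping $\sum_{(x,y)}(v(y)-v(x))k(x,y)$ by collecting all terms involving a fixed vertex $x$. Each vertex $x$ contributes $-v(x)\sum_{y\sim x}k(x,y) = -v(x)\Div_k(x)$ once we account for the orientation bookkeeping; carrying this out carefully yields $\sum_{e\in E} R(e)i(e)k(e) = -\sum_{x\in V} v(x)\Div_k(x)$. I would present this regrouping cleanly by writing the directed-edge sum as $\sum_{x\in V}\sum_{y\sim x} v(y)k(x,y)$ minus $\sum_{x\in V}\sum_{y\sim x}v(x)k(x,y)$ (each undirected edge counted in both orientations, with a factor of $1/2$), then use antisymmetry of $k$ to see the first double sum equals the negative of the second after relabeling, so everything collapses to $-\sum_{x\in V} v(x)\Div_k(x)$.

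Finally, I would invoke the hypothesis that $k$ is a flow from $a$ to $b$ with $\Div_k(x)=0$ for all $x\neq a,b$, and $\Div_k(b)=-\Div_k(a)$. Then $-\sum_{x\in V}v(x)\Div_k(x) = -v(a)\Div_k(a) - v(b)\Div_k(b) = -v(a)\Div_k(a)+v(b)\Div_k(a) = (v(b)-v(a))\Div_k(a)$, which is exactly the claimed identity. Note that this argument only uses that $i$ satisfies Ohm's Law (so it has a potential $v$) and that $k$ obeys Kirchhoff's Node Law away from $a,b$; it does not use that $i$ has unit strength, nor that $k$ is a current flow.

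The main obstacle is purely bookkeeping: keeping the edge orientations consistent so that the antisymmetry cancellations are applied correctly and the factor of $\frac{1}{2}$ (from counting each undirected edge twice) is handled without error. The substance of the proof — Ohm's Law plus discrete integration by parts plus the node law — is short; the risk is a sign error or a missing factor in the regrouping, so I would set up the orientation convention explicitly at the outset (e.g.\ fix an arbitrary orientation of each edge, or consistently sum over all ordered pairs $(x,y)$ with $x\sim y$ and divide by $2$) and then track it through each line.
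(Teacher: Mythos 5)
Your proposal is correct and follows essentially the same route as the paper: Ohm's law to replace $R(e)i(e)$ by the potential drop, a factor-$\tfrac{1}{2}$ double counting over ordered pairs, discrete summation by parts using antisymmetry of $k$ to collapse the sum to $-\sum_{x\in V} v(x)\Div_k(x)$, and then Kirchhoff's node law with $\Div_k(b)=-\Div_k(a)$ to finish. The sign and orientation bookkeeping you flag is handled exactly as you describe in the paper's own computation.
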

\begin{proof}
We have

\begin{align*}
\sum_{e\in E}R (e)i (e)k (e)    & = \frac{1}{2}\sum_{x\sim y, x,y\in V}R (x,y) i
(x,y)k (x,y) & \mbox{(double counting)}\\
& = \frac{1}{2}\sum_{x\sim y, x,y\in V} (v (y)-v (x))k(x,y)
&\mbox{(Ohm's law)}\\
& = \frac{1}{2}\sum_{y\in V} v (y)\sum_{x\in
V}k(x,y)-\frac{1}{2}\sum_{x\in V} v (x)\sum_{y\in V}k(x,y) & \mbox{($k
(x,y)=0$ if $x\not\sim y$)}\\
& =  -v (a)\Div_{k} (a)-v (b)\Div_{k} (b)& \mbox{(Kirchoff's node law)}\\
& = (v (b)-v (a))\Div_{k}(a)&\mbox{($\Div_k (b)=-\Div_{k} (a)$)} 
\end{align*}

\end{proof}
In particular, applying Proposition \ref{prop:innerprod} to the
case when $k$ equals 
the unit current flow $i$ itself and using
(\ref{eq:effres}), we find that
\begin{equation}\label{eq:energyeffres}
\Energy (i)=(v (b)-v (a))\Div_{i}(a)=\cReff (a,b).
\end{equation}
\begin{theorem}[Thomson's principle]\label{thm:thomson}
The unique unit flow from $a$ to $b$ that
minimizes energy is the unit current flow. 
\end{theorem}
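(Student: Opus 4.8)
The plan is to exploit the inner-product identity from Proposition~\ref{prop:innerprod} to show that the unit current flow $i$ is orthogonal (in the energy inner product) to the space of cycle flows, which are exactly the differences of unit flows from $a$ to $b$. First I would take an arbitrary unit flow $k$ from $a$ to $b$ and write $k = i + (k-i)$. The difference $\theta \defeq k - i$ satisfies $\Div_\theta(x) = 0$ at \emph{every} node $x \in V$, including $a$ and $b$, since $k$ and $i$ have the same strength $1$ at $a$; thus $\theta$ is a genuine (divergence-free) flow on the whole graph.

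Next I would compute the energy of $k$ by expanding the square:
\begin{equation*}
\Energy(k) = \sum_{e\in E} R(e)(i(e)+\theta(e))^2 = \Energy(i) + 2\sum_{e\in E} R(e)i(e)\theta(e) + \Energy(\theta).
\end{equation*}
The cross term vanishes: applying Proposition~\ref{prop:innerprod} with the flow $\theta$ in place of $k$ gives $\sum_{e\in E} R(e)i(e)\theta(e) = (v(b)-v(a))\Div_\theta(a) = 0$ because $\Div_\theta(a)=0$. Hence $\Energy(k) = \Energy(i) + \Energy(\theta) \geq \Energy(i)$, with equality if and only if $\Energy(\theta) = 0$. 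Since every resistance $R(e)$ is strictly positive (the graph is connected and conductances are positive), $\Energy(\theta)=0$ forces $\theta(e)=0$ for every edge $e$, i.e. $k = i$. This establishes both that $i$ minimizes energy among unit flows from $a$ to $b$ and that it is the unique such minimizer.

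I do not anticipate a serious obstacle here: the whole argument is the standard Hilbert-space projection fact that the current flow is the component of any unit flow lying orthogonal to the cycle space, and Proposition~\ref{prop:innerprod} has already done the real work. The one point requiring a small amount of care is verifying that $\theta = k-i$ really is divergence-free at the source and sink — this is immediate from both flows having unit strength — and observing that strict positivity of the resistances is what upgrades ``minimizer'' to ``unique minimizer.'' No additional machinery beyond Proposition~\ref{prop:innerprod} and equation~(\ref{eq:energyeffres}) is needed.
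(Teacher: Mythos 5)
Your proof is correct and is essentially identical to the paper's: the same decomposition $k = i + (k-i)$ into the current flow plus a strength-zero flow, the same use of Proposition~\ref{prop:innerprod} to kill the cross term, and the same conclusion from $\Energy(k)=\Energy(i)+\Energy(k-i)$. Your added remark that strict positivity of the resistances is what forces $\Energy(k-i)=0$ to imply $k=i$ is a small point the paper leaves implicit, but the argument is the same.
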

\begin{proof}
Let $j$ be a unit flow from $a$ to $b$ and $i$ the unit current flow from $a$ to
$b$. Then $k\defeq j-i$ is also a flow from $a$ to $b$, but of
strength zero. We have

\begin{align*}
\Energy (j) & = \sum_{e\in E}R (e) (i (e)+k (e))^{2}\\
& = \Energy (i)+\Energy(k)+2\sum_{e\in E}R (e)i (e)k (e).
\end{align*}
Using Proposition \ref{prop:innerprod}, we see that the
cross-term $\sum_{e\in E}R (e)i (e)k (e)$ equals zero since $\Div_{k}(a)=0$. Therefore, the energy
of $j$ is strictly greater than the energy of $i$, unless $\Energy
(k)=0$ in which case $k\equiv 0$ and $i\equiv j$.
\end{proof}
\begin{corollary}\label{cor:rayleigh}
Effective resistance is a (not necessarily strictly) increasing
function of the edge resistances. 
\end{corollary}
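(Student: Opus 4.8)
The plan is to derive Corollary~\ref{cor:rayleigh} directly from Thomson's principle (Theorem~\ref{thm:thomson}) together with the energy formula \eqref{eq:energyeffres}. The key observation is that both quantities in play—effective resistance and the energy of a flow—are expressed in terms of the edge resistances $R(e)$, and the variational characterization lets us compare two networks whose only difference is that the resistances have been (weakly) increased.

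First I would set up notation: let $G$ be our fixed graph with two choices of resistance functions $R$ and $R'$ satisfying $R(e)\le R'(e)$ for every $e\in E$. Write $\Energy_R(j)=\sum_{e\in E}R(e)(j(e))^2$ and $\Energy_{R'}(j)=\sum_{e\in E}R'(e)(j(e))^2$ for the energies computed with the two resistance functions, and let $\cReff$ and $\cReff'$ denote the corresponding effective resistances between fixed nodes $a$ and $b$. The pointwise inequality $R(e)\le R'(e)$, combined with the fact that $(j(e))^2\ge 0$ for every flow $j$, gives the term-by-term bound $\Energy_R(j)\le\Energy_{R'}(j)$ for \emph{every} unit flow $j$ from $a$ to $b$.

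Next I would invoke Thomson's principle twice. Let $i$ be the unit current flow for the network with resistances $R$ and $i'$ the unit current flow for the network with resistances $R'$; both are unit flows from $a$ to $b$. By \eqref{eq:energyeffres}, $\cReff(a,b)=\Energy_R(i)$ and $\cReff'(a,b)=\Energy_{R'}(i')$. Since $i'$ is in particular \emph{a} unit flow from $a$ to $b$ in the network with resistances $R$, Theorem~\ref{thm:thomson} gives $\Energy_R(i)\le\Energy_R(i')$. Chaining this with the pointwise energy comparison applied to the flow $i'$ yields
\[
\cReff(a,b)=\Energy_R(i)\le\Energy_R(i')\le\Energy_{R'}(i')=\cReff'(a,b),
\]
which is exactly the assertion that effective resistance is a nondecreasing function of the edge resistances.

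I do not anticipate a serious obstacle here; the only point requiring a moment's care is making sure the comparison flow used in Thomson's principle lives in the right network—that is, that $i'$, though it was defined as the current flow for $R'$, is still a legitimate competitor unit flow when we minimize $\Energy_R$ over unit flows. This is immediate because the class of unit flows from $a$ to $b$ depends only on the combinatorics of $G$ (the Kirchhoff node law and the strength-one normalization), not on the resistances. One could optionally remark that strictness holds unless the current flows coincide on the edges where $R$ and $R'$ differ, but the statement only claims monotonicity, so the argument above suffices.
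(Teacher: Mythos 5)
Your proposal is correct and is essentially identical to the paper's proof: both compare $\cReff(a,b)=\Energy_R(i)\le\Energy_R(i')\le\Energy_{R'}(i')=\cRpeff(a,b)$, using Thomson's principle for the first inequality, the pointwise bound $R(e)\le R'(e)$ for the second, and \eqref{eq:energyeffres} at both ends. Your additional remark that the class of unit flows is independent of the resistances is a nice clarification but does not change the argument.
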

\begin{proof}
Let $R (e)\leq R^{\prime} (e)$ for every $e\in E$, and let $i$ and
$i^{\prime}$ be the corresponding unit current flows from $a$ to $b$. Then

\begin{align*}
\cReff & =\sum_{e\in E}R (e) (i (e))^{2} & \mbox{By (\ref{eq:energyeffres})}\\
\ & \leq \sum_{e\in E}R (e) (i^{\prime} (e))^{2} & \mbox{(By Thomson's principle)}\\
\ & \leq \sum_{e\in E}R^{\prime } (e) (i^{\prime} (e))^{2} & \mbox{(By assumption)}\\
& = \cRpeff & \mbox{By (\ref{eq:energyeffres})}
\end{align*}

\end{proof}

\begin{example}
A {\em tree} $T$ is a connected graph with no loops. It follows that
given two vertices $x$ and $y$ on a tree, there is a unique {\sf
geodesic}, i.e.,  curve of minimal length. In this case the effective resistance and the graph distance coincide: 
\[
\cReff(x,y)=d_G(x,y),
\]
because no current can flow along edges that are not part of the unique geodesic.

Moreover, in a graph $G$ we always have 
\begin{equation}\label{eq:graphdistbound}
\cReff(x,y)\leq d_G(x,y).
\end{equation}
To see this, pick a shortest curve $\ga$ connecting $x$ to $y$ and note that it will not have any loops, since removing an edge along a loop does not affect the connectedness of $\ga$. Then complete $\ga$ to a {\em spanning tree} $T$ for $G$, namely a tree on the same $N$ vertices of $G$ which is also a subgraph of $G$. By monotonicity, Corollary \ref{cor:rayleigh}, we have $\cReff(x,y;G)\leq \cReff(x,y;T)$, because removing an edge from $G$ is equivalent to setting its resistance to be $\infty$. On the other hand, since $T$ is a tree $\cReff(x,y;T)=d_T(x,y)=d_G(x,y)$.
\end{example}
\begin{proposition}\label{effresmetr}
Effective resistance is a metric. 
\end{proposition}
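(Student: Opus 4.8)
The plan is to verify the three metric axioms for $\cReff$. \emph{Symmetry} I would read off \eqref{eq:energyeffres}: if $i$ is the unit current flow from $a$ to $b$, with potential $v$, then $-i$ is a unit flow from $b$ to $a$ obeying Ohm's law with potential $-v$, so by uniqueness it is the unit current flow from $b$ to $a$, giving $\cReff(b,a)=\Energy(-i)=\Energy(i)=\cReff(a,b)$. For \emph{non-degeneracy} I would first extend the definition by $\cReff(a,a)\defeq 0$ (no current flows, $v$ constant); conversely, for $a\neq b$ the unit current flow $i$ from $a$ to $b$ has $\Div_i(a)=1$, which forces some edge incident to $a$ to carry nonzero current, so $\cReff(a,b)=\Energy(i)=\sum_{e\in E}R(e)i(e)^2>0$ since the edge resistances are finite and positive.

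The real content is the \emph{triangle inequality} $\cReff(a,c)\le\cReff(a,b)+\cReff(b,c)$. I may assume $a,b,c$ are distinct, since otherwise the inequality reduces to $\cReff(x,y)\le\cReff(x,y)+0$ or $0\le\cReff(x,y)+\cReff(y,x)$. Let $i_{ab}$ and $i_{bc}$ be the unit current flows from $a$ to $b$ and from $b$ to $c$, and set $\theta\defeq i_{ab}+i_{bc}$. Checking divergences --- at $a$ one gets $1+0$, at $c$ one gets $0-1$, at $b$ one gets $-1+1=0$, and $0$ at every other vertex --- shows $\theta$ is a unit flow from $a$ to $c$, so Thomson's principle (Theorem \ref{thm:thomson}) together with \eqref{eq:energyeffres} gives
\[
\cReff(a,c)=\Energy(i_{ac})\le\Energy(\theta)=\Energy(i_{ab})+\Energy(i_{bc})+2\sum_{e\in E}R(e)\,i_{ab}(e)\,i_{bc}(e).
\]

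The step I expect to be the main obstacle is showing that the cross term $\sum_{e\in E}R(e)\,i_{ab}(e)\,i_{bc}(e)$ is nonpositive: for two arbitrary flows this quadratic expression has no definite sign, so I must exploit that $i_{ab}$ is a \emph{current} flow, i.e.\ an edge-difference of a potential. The calculation in the proof of Proposition \ref{prop:innerprod}, stopped just before Kirchhoff's node law is invoked, shows that for \emph{any} flow $k$ one has $\sum_{e\in E}R(e)\,i_{ab}(e)\,k(e)=-\sum_{x\in V}v_{ab}(x)\,\Div_{k}(x)$; taking $k=i_{bc}$, whose divergence is $1$ at $b$, $-1$ at $c$, and $0$ elsewhere, the cross term equals $v_{ab}(c)-v_{ab}(b)$ (this value is independent of the additive constant in $v_{ab}$ because $\sum_{x}\Div_{i_{bc}}(x)=0$). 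Finally $v_{ab}$ is harmonic on $V\setminus\{a,b\}$, so by the maximum principle (Fact \ref{fact:maxpr}) it attains its extreme values on $\{a,b\}$; since $v_{ab}(b)-v_{ab}(a)=\cReff(a,b)>0$, its maximum is $v_{ab}(b)$, hence $v_{ab}(c)\le v_{ab}(b)$ and the cross term is $\le 0$. Substituting back yields $\cReff(a,c)\le\Energy(i_{ab})+\Energy(i_{bc})=\cReff(a,b)+\cReff(b,c)$, which is the triangle inequality; the only remaining work is the divergence bookkeeping above and the degenerate cases.
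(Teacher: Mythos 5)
Your proof is correct and follows essentially the same route as the paper: the paper's one-line justification of the triangle inequality is precisely the superposition $\theta=i_{ab}+i_{bc}$ that you construct, and your detailed argument (Thomson's principle plus the sign of the cross term via the maximum principle applied to $v_{ab}$) correctly fills in what the paper leaves implicit. Your treatment of symmetry and non-degeneracy likewise matches the paper's.
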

\begin{proof}
Symmetry follows by reversing the current flow. Non-degeneracy follows
from (\ref{eq:energyeffres}) and the existence of a unit current flow. 
Finally, the triangle inequality is 
verified by noticing that inputing a unit current flow at $x$,
extracting it at $z$, then reinputing it at $z$ and extracting at $y$
is the same as just inputing it at $x$ and extracting it at $y$.  
\end{proof}

\subsection{Computing effective resistance with matrices}\label{ssec:effresmatrix}

\subsubsection{Adjacency matrix}\label{ssec:adjacency}

The {\em adjacency matrix} of a graph is a way to
represent which nodes of a graph are adjacent to each other. Given a
simple graph $G= (V,E)$, let  $A$ be the matrix with entries:
    $$A(i,j)= \left\{ \begin{array}{rl}1 &\mbox{  $(i,j)\in E,$} \\0
&\mbox{ otherwise.}\end{array} \right.$$
(We write $(i,j)\in E$ instead of $\{i,j\}\in E$ by abuse of notation, even though $(i,j)$ is an ordered pair.)

If the graph is weighted with weight $W(i,j)$, then we set
    \[A(i,j) = \left\{ \begin{array}{cl}W(i,j) &\mbox{ $(i,j)\in E,$}
\\0 &\mbox{ otherwise.}\end{array} \right.\] 

\subsubsection{Combinatorial Laplacian}

     Let $G$ denote a graph with vertex set
     $V = \{1,2,\dots,N\}$
     and edge set $E$. Then the {\em combinatorial Laplacian} $L$ is
an $N\times N$ matrix defined by
     $$L(i,j) = \left\{ \begin{array}{cl} d_{i} &\mbox{ if $i=j$,} 
\\-W(i,j) &\mbox{ if $(i,j)\in E$ }\\ 0 &\mbox{ otherwise }\end{array}
\right.$$
where $d_{i}$ is the degree of the vertex $i$, i.e.
\[
d_{i}\defeq \sum_{j}A(i,j).
\]
Letting  $D$ be the diagonal matrix 
$D=\Diag (d_{i})$, we see that $L = D- A$.

Also recalling the transition matrix $P$ for the random walk on $G$
defined in (\ref{eq:grphtrans}), we see that 
\[
L =D ( I - P ).
\]
\subsubsection{The square-root of the Laplacian}
Given an edge $e$ joining two nodes, arbitrarily assign one of these two nodes
to be $e$'s {\em tail} and the other one to be $e$'s {\em head}. Once
this choice is made we will write $e= (x,y)$ to mean that $x$ is the
tail and $y$ the head.

Consider the following  $M \times N$ matrix, where $M = |E|$,
and $N = |V|$,
\begin{equation}\label{eq:sqrtlapl}
B(e,x) \defeq  \left\{ \begin{array}{rl} 1 &\mbox{ if $x$ is $e$'s head }
\\-1 &\mbox{ if $x$ is $e$'s tail  }\\ 0 &\mbox{ otherwise }\end{array}
\right.
\end{equation}

We claim
that 
\begin{equation}\label{eq:lbtwb}
L = B^TWB
\end{equation}
where  $W$  is the $M\times  M$ diagonal matrix with entries $W (e)$
(the weight of edge $e$).  To check this, notice that
     \[ (B^{T}WB) (x,y) = \sum_{e} B^T(x,e)W (e) B(e,y)
 = \sum_{e}B(e,x)B(e,y)W(e).\]
If $x\neq y$, the only term that survives in the sum is when $x$ and $y$
are neighbors and $e= (x,y)$, then we get $-W (x,y)$. If $x=y$, any
edge from $x$ contributes $B (e,x)^{2}W (e)=W (e)$, so we get the
degree at $x$. 

\subsubsection{Quadratic Form}\label{sssec:quadform}
A {\em quadratic form} is a function of the form $Q(x)= x^TAx$, where
$A$ is a symmetric matrix. Since the combinatorial Laplacian is
symmetric we can write
\[
\begin{array}{ll}
 v^TLv & = \sum_{x,y \in V}v (x) L(x,y)v(y)\\
&\\
& = v^TB^TWBv = ||W^\frac{1}{2}Bv||^2_2 \\
&\\
& = \sum_e W(e)(Bv)(e)^{2} \\
&\\
& = \frac{1}{2}\sum_{(x,y) \in E}W (x,y) (v(x)-v(y))^2
\end{array}
\]
It follows that the quadratic form associated to the Laplacian is
positive semi-definite. In practice, this implies that the eigenvalues
are greater than or equal to zero, as can be seen from the so-called
``Minimax characterization'': let $\lambda_{1}\leq 
\lambda_{2}\leq \cdots \leq \lambda_{N}$, then
\[
\lambda_{k} = \min_{S: \Dim S=k} \max_{x \perp S}
\frac{x^TLx}{x^Tx}, 
\]
where $S$ is a linear subspace of $\R^N$.

\begin{remark}\label{rem:quadform}
If $v$ is the potential resulting from inputing a unit
current flow $i$ at $a$ and extracting it at $b$, then using Ohm's Law
and (\ref{eq:energyeffres}) we see that the quadratic form
satisfies:
\[
v^{T}Lv= \frac{1}{2}\sum_{(x,y) \in E}W (x,y) (v(x)-v(y))^2 =
\sum_{e}R (e)i (e)^{2}=\Energy (i)= \cReff (a,b).  
\]
\end{remark}

\subsubsection{The Kernel of the Laplacian}
We claim that the kernel of the Laplacian consists of the constant vectors:
     \[ \Ker(L) = \Ker(W^\frac{1}{2}B)= \Span\{[1\cdots 1]^T\}\] 
We use the quadratic form to check this:
\[
\begin{array}{ll}
Lv=0 & \Longleftrightarrow v^TLv = 0\\
&\\
& \Longleftrightarrow ||W^\frac{1}{2}Bv||_2^2 = 0 \\
&\\
& \Longleftrightarrow \sum_{x,y} W (x,y) (v(x)-v(y))^2 = 0 \\
&\\
& \Longleftrightarrow v(x)=v(y) \qquad \forall\  x,y.
\end{array}
\]
In particular, this means that the smallest eigenvalue of the
Laplacian is $0$. Traditionally, we relabel the eigenvalues from $0$ to
$N-1$, as follows: 
\begin{equation}\label{eq:eigenvalues}
\lambda_{0} = 0 < \lambda_{1} \leq \cdots \cdots
\leq \lambda_{N-1}.
\end{equation}

\subsubsection{Diagonalizing the Laplacian}

Being symmetric, the combinatorial Laplacian $L$ can be diagonalized,
i.e., there are eigenvectors $u_{0}$, $u_{1}$,\dots ,$u_{N-1}$ such that
\[
 L = \sum_{i=1}^{N-1}\lambda_{i}u_{i}u_{i}^T = U\Lambda U^T 
\]
Where $U=[u_{0}\ u_{1}\cdots u_{N-1}]$, $u_{0}= (1\cdots 1)^{T}$ and
 $\Lambda$ is the diagonal matrix of eigenvalues,

 \[\Lambda = 
\left[ \begin{array}{cccc}
0 & \cdots & \cdots & 0 \\
0 & \lambda_1 & \cdots & 0 \\
\vdots & \vdots & \ddots& \vdots \\
0 & 0 & \cdots & \lambda_{N-1} \end{array} \right]
\]

Recall that given a vector $u$, the matrix $uu^{T}$ is a rank one
matrix with range the line spanned by $u$.

Since the Laplacian has a  non-trivial kernel, it is not
invertible. However, we can still define a
pseudoinverse called the {\em Green operator}.  
     \[\cG =\sum_{i=1}^{N-1}\frac{1}{\lambda_{i}}u_{i}u_{i}^T.\]
  Then $\Ker (\cG)=\Ker (L)$ and
     \[ L\cG = \cG L = \sum_{i=1}^{N-1}u_{i}u_{i}^T,  \]
 which is also equal to the projection in $\R^{N}$  onto $\Span
(u_{1}, \dots, u_{N-1})$. 

\subsubsection{Kirchoff's and Ohm's Law revisited}

Let $I_{ext}$ denote the current injected at point $a$ and
extracted at point $b$, which can be thought of as an $N \times 1$ vector
     $$[ 0 \cdots 0\  -|I_{a}| \ 0 \cdots 0\ |I_{b}|\ 0
\cdots  0]^T.$$ 
 The current $i(e)$ for each edge $e$, can be written as an
$M\times 1$ vector since there are $M$ edges. Since each edge $e$ is
assigned a head and a tail, then $i(e)$ will either be positive or  negative  depending on whether the current flows from tail to head or vice versa. Then Kirchoff's node law can be written in matrix form  as
\begin{equation}\label{eq:currentlaw}
B^Ti = I_{ext},
\end{equation}
where $B$ is the square-root of the Laplacian defined in (\ref{eq:sqrtlapl}).

To see this check that at every vertex $x$,
     \[ I_{ext}(x) = \sum_{e\in E}B^T(x,e)i (e) =
\sum_{e\in E}B(e,x)i (e).\]
The only terms that survive in this sum correspond to edges $e$ with
either their head or tail at $x$. Let's first assume that each edge was oriented in such a way that the current
always flows from tail  to head. Then $B (e,x)=1$ for every edge with
current flowing in at $x$ and $B (e,x)=-1$ for every edge with current flowing out of $x$  (and $i(e)$ is always positive with this choice of
orientation). So the sum we get is exactly the divergence at $x$.  If for some reason an edge is oriented against the current flow then both $B(e,x)$ and $i(e)$ change sign so their product does not. 
    
Ohm's Law says that the resulting
voltage $v$ on the network satisfies 
     \[ i(x,y) =
\frac{v(y)-v(x)}{R(x,y)}=W (x,y) (v
(y)-v (x)). \] 
     In matrix form Ohm's Law can be written as: 
\begin{equation}\label{eq:ohmslaw}
i = WBv,
\end{equation}
as the following computation shows:
     \[(WBv) (e)=\sum_{z\in V}W(e)B(e,z)v(z) = W(e)(v(y)-v(x)),\]
where $e= (x,y)$, since $y$ is the head and $x$ the tail of $e$.

     Combining (\ref{eq:currentlaw}) and (\ref{eq:ohmslaw}) with
(\ref{eq:lbtwb}), we see that 
\[I_{ext} = B^TWBv = Lv.\]
We interpret this as an inhomogeneous problem to solve in $v$ with $I_{ext}$ given. However, as we've seen $L$ is not generally
invertible, unless $I_{ext}$ is perpendicular to the kernel of $L$,
i.e., $\Span\{ [1 \cdots  1 ]^T\}$. That's
exactly our case, since the current input at $a$ equals the current
output at $b$. Therefore, we can apply Green's operator to both sides
and get the
potential drop
\[
v = \cG  L v= \cG I_{ext}.
\]
In other words, writing 
\[
\eta_{ab}\defeq  [0\ \cdots 0\ -1 \ 0 \cdots 0\  1 \ 0 \cdots 0 ]^T = I_{ext},
\]
we get that the effective resistance
     \[\cReff(a,b) = v(b)-v(a) =\eta _{ab}^Tv = 
\eta_{ab}^T \cG \eta_{ab}.\] 
      If $a$ and $b$ are neighbors, then $\eta_{ab}^{T}$ is a row of $B$
corresponding to the edge $e= (a,b)$, and thus
     \[(B \cG B^T)(e,e) = \cReff(e).\]

\section{Concepts imported from function and potential theory}\label{sec:capmod}

\subsection{Capacity}

A function $u$ defined on the vertices induces a 
{\em gradient} $\rho_u$ on the edges, i.e., for $e=\{x,y\}$,
\[
\rho_u(e)\defeq |u(x)-u(y)|.
\]
The {\em energy} of $\rho_u$ is
\begin{equation}\label{eq:energy}
\cE(\rho_u)=\sum_{e\in E}\rho_u(e)^2.
\end{equation}
(On a weighted graph we modify this definition as follows: $\cE(\rho_u)=\sum_{e\in E}\rho_u(e)^2w(e)$.)

Given two nodes $a$ and $b$ in a graph $G$, we will minimize the energy among all the functions $u:V\rightarrow \R$ with $u(a)=0$ and $u(b)=1$. 
We define the {\em capacity} between $a$ and $b$ to be
\[
\capa(a,b)=\min_{u(a)=0,u(b)=1} \cE(\rho_u).
\]
The function $u$ that attains the minimum is called the {\em capacitary} function for $a$ and $b$.

Assuming that each edge has unit resistance, recall that Thomson's principle (Theorem \ref{thm:thomson}) and (\ref{eq:energyeffres}) imply that the effective resistance between $a$ and $b$ can be computed by minimizing the energy of all unit flows between $a$ and $b$, and that the minimum is achieved for the unit {\em current} flow. The electric potential $v$ that gives rise to the unit current flow can be used to interpret the unsigned flow as a "gradient". In fact, by Ohm's Law, for $e=\{x,y\}$,
\[
|i(e)|=|v(x)-v(y)|.
\]
The electric potential $v$ is only defined up to an additive constant, what matters is the drop $|v(a)-v(b)|$ which equals $\cReff(a,b)$. 

So, dividing $v$ by $\cReff(a,b)$ and shifting by a constant $C$ if necessary, we get a function
\begin{equation}\label{eq:capacitary}
U\defeq \frac{v}{\cReff(a,b)}+C
\end{equation}
such that $U(a)=0$, $U(b)=1$, and whose gradient is 
\[
\rho_U(e)=\frac{|i(e)|}{\cReff(a,b)}.
\]
Computing the energy of $\rho_U$ and using (\ref{eq:energyeffres}) we get an upper bound for $\capa(a,b)$:
\[
\capa(a,b)\leq \frac{1}{\cReff(a,b)}=\cCeff(a,b).
\]
This inequality is in fact an equality, even though the two sides are obtained by minimizing the energy of very different objects: flows that don't necessarily admit potentials, on one hand, and gradients which are obtained from a "potential" function defined on the vertex set, on the other hand.
\begin{proposition}\label{prop:capaceff}
We always have
\[
\capa(a,b)=\cCeff(a,b),
\]
and the function $U$ defined in (\ref{eq:capacitary}) is the capacitary function for $a$ and $b$.
\end{proposition}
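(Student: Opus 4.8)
The plan is as follows. The discussion preceding the proposition already shows $\capa(a,b)\le\cCeff(a,b)$ by testing the energy with the function $U$ of (\ref{eq:capacitary}), and it records the value $\cE(\rho_U)=1/\cReff(a,b)=\cCeff(a,b)$. So it suffices to prove the reverse inequality $\cE(\rho_u)\ge\cCeff(a,b)$ for every admissible $u$, with equality forcing $u=U$; both assertions of the proposition then follow at once. (I work in the unit-resistance normalization used to define $U$; the weighted case is identical after replacing the combinatorial Laplacian $L$ below by $B^TWB$.)

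First I would fix an arbitrary $u\colon V\to\R$ with $u(a)=0$ and $u(b)=1$, and set $w\defeq u-U$, so that $w(a)=w(b)=0$. Since $\cE(\rho_u)$ only involves the squares $(u(x)-u(y))^2$, expanding gives
\[
\cE(\rho_u)=\sum_{e=\{x,y\}}\big((U(x)-U(y))+(w(x)-w(y))\big)^2=\cE(\rho_U)+\cE(\rho_w)+2\sum_{e=\{x,y\}}(U(x)-U(y))(w(x)-w(y)).
\]
The heart of the matter is that the cross term vanishes. I would establish this by the same double-counting / summation-by-parts manoeuvre used in the proof of Proposition \ref{prop:innerprod}: regrouping the sum vertex-by-vertex yields the discrete Green's identity
\[
\sum_{e=\{x,y\}}(U(x)-U(y))(w(x)-w(y))=\sum_{x\in V}w(x)\Big(d_xU(x)-\sum_{y\sim x}U(y)\Big)=\sum_{x\in V}w(x)(LU)(x).
\]
Now $U$ is an affine function of the voltage $v$, hence harmonic at every vertex $x\ne a,b$, i.e. $(LU)(x)=0$ there, while at $x=a,b$ the factor $w(x)$ vanishes; so the cross term is $0$. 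Therefore
\[
\cE(\rho_u)=\cE(\rho_U)+\cE(\rho_w)\ \ge\ \cE(\rho_U)=\cCeff(a,b),
\]
with equality if and only if $\cE(\rho_w)=0$, i.e. $w$ is constant on the connected graph $G$, i.e. $w\equiv0$ since $w(a)=0$. This gives $\capa(a,b)=\cCeff(a,b)$ and identifies $U$ as the unique capacitary function.

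It is worth recording the Lagrange-multiplier viewpoint mentioned in the introduction, which is how one \emph{discovers} that the minimizer equals $U$: the first-order conditions for minimizing $\cE(\rho_u)$ under the constraints $u(a)=0$, $u(b)=1$ read $\frac{1}{2}\,\partial_{u(x)}\cE(\rho_u)=(Lu)(x)=0$ at every interior $x$, so any minimizer is harmonic off $\{a,b\}$; by the maximum principle (Fact \ref{fact:maxpr}) the harmonic function with boundary values $0$ at $a$ and $1$ at $b$ is unique, hence coincides with $U$. The delicate points are the bookkeeping in the discrete Green's identity above (tracking exactly which edges contribute to $\partial_{u(x)}\cE$, and the factor $2$), and—on the Lagrange route—a one-line justification that the minimum is attained: on the affine slice $\{u(a)=0,u(b)=1\}$ the quadratic form $\cE(\rho_u)$ is coercive, since a bound on $\cE(\rho_u)$ bounds $|u(x)|=|u(x)-u(a)|$ along a path by Cauchy–Schwarz, so its sublevel sets are compact.
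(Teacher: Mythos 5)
Your proof is correct, but it takes a genuinely different route from the paper's. The paper minimizes the quadratic form $u^TLu$ on the affine slice $\{u(a)=0,\ u(b)=1\}$ via Lagrange multipliers: the first-order condition $2Lw=\lambda_a e_a+\lambda_b e_b$ forces any critical point to be harmonic off $\{a,b\}$, and the maximum principle then identifies it with $U$; the value $\cE(\rho_U)=\cCeff(a,b)$ comes from the preceding discussion. You instead run the discrete Dirichlet-principle argument: write $u=U+w$ with $w(a)=w(b)=0$, polarize the energy, and kill the cross term $w^TLU=\sum_x w(x)(LU)(x)$ using harmonicity of $U$ off $\{a,b\}$ together with $w(a)=w(b)=0$ — exactly the summation-by-parts trick of Proposition \ref{prop:innerprod}, now on the potential side rather than the flow side (it is the gradient analogue of the proof of Thomson's principle, Theorem \ref{thm:thomson}). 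What your route buys is completeness: the Pythagorean identity $\cE(\rho_u)=\cE(\rho_U)+\cE(\rho_w)$ simultaneously proves the inequality, the attainment of the minimum, and uniqueness of the minimizer, whereas the paper's argument tacitly assumes the infimum is attained at a critical point (harmless here, since the form is convex and coercive on the slice, but unaddressed in the paper — a gap you correctly flag and patch with the coercivity remark). The cost is a slightly longer computation; the paper's version is shorter because it leans on the already-established maximum-principle uniqueness (Claim \ref{cl:uniq}).
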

\begin{proof}
The quadratic form induced by the combinatorial Laplacian of Section \ref{sssec:quadform} can be used to compute the energy of gradients, since
\[
u^TLu=\sum_{\{x,y\}\in E}|u(x)-u(y)|^2=\cE(\rho_u).
\]
In other words, if we number the nodes $1,\cdots,N$, the capacity $\capa(i,j)$ for $i< j$ is computed by minimizing the quadratic form restricted to the affine subspace of codimension $2$ 
\[
A_{ij}=\{x=(x_1,\dots,x_N)\in \R^N: x_i=0,\ x_j=1\}.
\]
This can be handled with the method of Lagrange multipliers. Let $g_i(x)=x_i$ for $i=1,\dots,N$ and $f(x)=x^TLx$.
Then given $i<j$,
\begin{equation}\label{eq:lagrange}
\nabla f=\lambda_i\nabla g_i+\lambda_j\nabla g_j
\end{equation}
for some parameters $\lambda_i$ and $\lambda_j$.

Notice that $\nabla g_i=e_i$, the standard unit vector in the $i$-th direction. Also $\nabla f(x)=2Lx$. Let $w=(w_1,\dots,w_n)$ be a solution of (\ref{eq:lagrange}). Then, interpreted as a function, $w$ is harmonic at every node except possibly for nodes $i$ and $j$. Moreover, $w_i=0$ and $w_j=1$. By the maximum principle, there is a unique harmonic function with these boundary values.  Therefore the solution to the Lagrange multipliers problem coincides with $U$ from (\ref{eq:capacitary}), i.e., the harmonic function obtained by renormalizing the electric potential $v$ arising from the effective resistance problem.
\end{proof}

\subsection{Modulus of curve families}

We relax the notion of gradient and consider arbitrary 
{\em $\rho$-densities}, i.e., functions $\rho: E\rightarrow [0,+\infty)$. The {\em $\rho$-length} of  a curve $\ga$ is then:
\begin{equation}\label{eq:rholength}
\rlength (\ga)=\sum_{e\in E(\ga)}\rho(e).
\end{equation}
We measure the energy of $\rho$ as done before in (\ref{eq:energy}) for gradients.

A {\em curve family}  $\Ga$ is a collection of curves $\ga$ in a graph $G$.
We say that $\rho$ is {\em admissible} for the curve family $\Ga$ if 
\begin{equation}\label{eq:admissible}
\rlength(\ga)\geq 1\qquad \forall \ga\in\Ga.
\end{equation}
We write $\cA$ for the family of all admissible densities for a given curve family $\Ga$.

The {\em modulus} of $\Ga$ is 
\begin{equation}\label{eq:modulus}
\Mod(\Ga)=\inf_{\rho\in \cA}\cE(\rho).
\end{equation}
The advantage of modulus is that any choice of admissible density gives rise to an upper-bound.
If the family $\Ga$ contains a constant curve, then its modulus is infinite. Otherwise, choosing $\rho\equiv 1$ we see that 
$\Mod(\Ga)$ is bounded above by $|E|$, the number of edges.

Given two nodes $a$ and $b$, let $\Mod(a,b)$ be the modulus of the curve-family consisting of all curves $\ga$ that contain both $a$ and $b$. We call this curve-family $\Ga(a,b)$.

It turns out that modulus and capacity are closely related concepts. 
\begin{proposition}\label{prop:modcap} We always have
\[
\Mod(a,b)=\capa(a,b).
\]
\end{proposition}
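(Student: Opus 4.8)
The plan is to prove the two inequalities $\Mod(a,b)\le\capa(a,b)$ and $\Mod(a,b)\ge\capa(a,b)$ separately, exploiting the fact that both quantities are defined as minima of the energy functional $\cE(\rho)=\sum_{e\in E}\rho(e)^2$ over different but comparable sets of densities. The easy direction is $\Mod(a,b)\le\capa(a,b)$: given the capacitary function $u$ with $u(a)=0$, $u(b)=1$ realizing $\capa(a,b)$, I would check that its gradient $\rho_u(e)=|u(x)-u(y)|$ is admissible for $\Ga(a,b)$. Indeed, for any curve $\ga$ containing both $a$ and $b$, a telescoping estimate along a path inside $\ga$ from $a$ to $b$ gives $\rlength(\ga)=\sum_{e\in E(\ga)}\rho_u(e)\ge |u(b)-u(a)|=1$. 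Hence $\rho_u\in\cA$ and therefore $\Mod(a,b)\le\cE(\rho_u)=\capa(a,b)$.

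For the reverse inequality $\Mod(a,b)\ge\capa(a,b)$ the idea is to take an admissible (ideally extremal) density $\rho$ for $\Ga(a,b)$ and manufacture from it a function $u$ on the vertices with $u(a)=0$, $u(b)=1$ whose energy is no larger than $\cE(\rho)$; then $\capa(a,b)\le\cE(\rho_u)\le\cE(\rho)$, and taking the infimum over admissible $\rho$ finishes the proof. The natural candidate is the ``$\rho$-distance from $a$'': set
\[
u(x)\defeq\min_{\ga:a\leadsto x}\ \rlength(\ga)=\min_{\ga:a\leadsto x}\sum_{e\in E(\ga)}\rho(e),
\]
truncated (or, if $\rho$ is extremal, automatically normalized) so that $u(a)=0$ and $u(b)=1$; admissibility of $\rho$ guarantees $u(b)\ge 1$, and one may replace $u$ by $\min(u,1)$, which only decreases energy. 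The key point to verify is the pointwise comparison of gradients: for each edge $e=\{x,y\}$ one has $|u(x)-u(y)|\le\rho(e)$, since appending the edge $e$ to a near-optimal path to $x$ produces a path to $y$ of $\rho$-length at most $u(x)+\rho(e)$, and symmetrically. Squaring and summing over edges yields $\cE(\rho_u)=\sum_e|u(x)-u(y)|^2\le\sum_e\rho(e)^2=\cE(\rho)$, which is exactly what is needed. Alternatively, one can bypass the construction of $u$ entirely by invoking Proposition \ref{prop:capaceff}: since $\capa(a,b)=\cCeff(a,b)$, it suffices to show $\Mod(a,b)=\cCeff(a,b)$; the density $\rho_U=|i(e)|/\cReff(a,b)$ from the capacity discussion is admissible (by the same telescoping argument applied to the potential $U$) and has energy $\cCeff(a,b)$, giving $\Mod(a,b)\le\cCeff(a,b)$, while the lower bound still requires the distance-function construction above or a Lagrange-multiplier argument mirroring the proof of Proposition \ref{prop:capaceff}.

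The main obstacle is the reverse inequality, specifically making the ``$\rho$-distance'' construction fully rigorous: one must check that truncation at level $1$ does not increase energy (clear, since truncation can only shrink edge-differences), that $u(b)=1$ is genuinely attained and not merely $\ge 1$ after truncation (it is, because $u(b)\ge 1$ before truncating and truncation sets it exactly to $1$), and — the subtle part — that the extremal density for $\Mod(a,b)$ exists and behaves well, or else that the argument goes through for arbitrary admissible $\rho$ with an $\epsilon$ of room. Since $\cE$ is a strictly convex continuous function on the closed convex set $\cA$ (which is nonempty as $\Ga(a,b)$ has no constant curve), the extremal $\rho$ exists and is unique, so one can work with it directly; I expect the cleanest write-up to combine this existence fact with the gradient-comparison lemma $|u(x)-u(y)|\le\rho(e)$ to conclude.
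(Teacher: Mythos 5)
Your proposal is correct and follows essentially the same route as the paper: admissibility of the capacitary gradient via a telescoping sum for one inequality, and the $\rho$-distance function $u(x)=\min_{\ga:a\leadsto x}\rlength(\ga)$ with the edge-wise comparison $|u(x)-u(y)|\le\rho(e)$ for the other, applied to an arbitrary admissible $\rho$ so no extremal density is needed. The only cosmetic difference is that you normalize by truncating $u$ at $1$, whereas the paper rescales $\rho$ so that the shortest $\rho$-length is exactly $1$; both handle the case $u(b)>1$ equally well.
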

\begin{proof}
Let $U$ be the capacitary function for $a$ and $b$ defined in (\ref{eq:capacitary}), whose gradient $\rho_U$ has energy $\cE(\rho_U)$ equal to $\capa(a,b)$. We first show that $\rho_U$ is an admissible $\rho$-density for $\Ga(a,b)$. Let $\ga$ be an arbitrary curve from $a$ to $b$. Then, since $\ga$ is connected, it must contain a chain of vertices $x_0=a,\ x_1,\dots,x_m=b$ so that $x_j\sim x_{j+1}$ for $j=0,\dots,m-1$. By the triangle inequality:
\[
\begin{array}{ll}
1=|U(a)-U(b)| & = \left|\sum_{j=0}^{m-1}(U(x_{j+1})-U(x_j))\right|\\
&\\
& \leq \sum_{j=0}^{m-1}|U(x_{j+1})-U(x_j)|\\
&\\
& = \sum_{j=0}^{m-1}\rho_U(\{x_j,x_{j+1}\})\\
&\\
& \leq \sum_{e\in E(\ga)}\rho_U(e).
\end{array}
\]
So $\rho_U$ is admissible for $\Ga(a,b)$ and 
\[
\Mod(a,b)\leq \cE(\rho_U)=\capa(a,b).
\]

Conversely, let $\rho$ be an arbitrary admissible density for $\Ga(a,b)$. Without loss of generality we can assume that there is a curve $\ga_0\in \Ga(a,b)$ such that $\rlength(\ga_0)=1$, because otherwise we could scale $\rho$ by dividing by the shortest $\rho$-length and still have an admissible density. 

Define a function $u$ on the vertices by
\[
u(x)=\min_{\ga:a\leadsto x} \rlength(\ga),
\]
i.e., the shortest $\rho$-length of curves connecting $a$ to $x$.
Then $u(a)=0$ (the constant curve has length zero), and $u(b)= 1$ because of the curve $\ga_0$ mentioned above.

Furthermore, for an arbitrary edge $e=\{x,y\}$,
\[
u(y)\leq u(x)+\rho(e),
\]
because the shortest curve from $a$ to $x$ followed by the edge $e$ is a curve from $a$ to $y$.
Therefore, inverting the roles of $x$ and $y$, we find that the gradient of $u$ satisfies
\[
\rho_u(e)=|u(x)-u(y)|\leq \rho(e).
\]
This in turn implies that $\cE(\rho_u)\leq\cE(\rho)$, i.e.,
\[
\capa(a,b)\leq \cE(\rho).
\]
Since $\rho$ was an arbitrary admissible density, we can minimize over $\rho$ and get
\[
\capa(a,b)\leq \Mod(a,b).
\]

\end{proof}

Putting Propositions \ref{prop:capaceff} and \ref{prop:modcap} together we obtain:

\begin{theorem}\label{thm:modcap}
The three concepts of effective conductance, capacity and modulus coincide:
\[
\cCeff(a,b)=\capa(a,b)=\Mod(a,b).
\]
\end{theorem}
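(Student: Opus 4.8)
The plan is essentially to do no new work at all: Theorem~\ref{thm:modcap} is a direct consequence of the two propositions immediately preceding it, so the ``proof'' is a short chaining of equalities. First I would invoke Proposition~\ref{prop:capaceff}, which gives $\capa(a,b)=\cCeff(a,b)$, and then Proposition~\ref{prop:modcap}, which gives $\Mod(a,b)=\capa(a,b)$. Concatenating these two identities yields
\[
\cCeff(a,b)=\capa(a,b)=\Mod(a,b),
\]
which is exactly the asserted statement. There is genuinely nothing to prove beyond recalling that both propositions hold for every pair of nodes $a,b\in V$, so the transitivity of equality does all the work.

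If one wanted to be slightly more self-contained, the alternative would be to re-derive the two chains of inequalities directly. On one side, the capacitary function $U$ from~\eqref{eq:capacitary}, built by renormalizing the electric potential $v$, simultaneously (i) has gradient energy equal to $\cCeff(a,b)$ by~\eqref{eq:energyeffres}, giving $\capa(a,b)\le\cCeff(a,b)$, and (ii) is admissible for $\Ga(a,b)$ by the triangle inequality along any connecting chain of vertices, giving $\Mod(a,b)\le\capa(a,b)$. On the other side, the shortest-$\rho$-length function $u(x)=\min_{\ga:a\leadsto x}\rlength(\ga)$ associated to any admissible $\rho$ has gradient dominated edgewise by $\rho$, hence $\capa(a,b)\le\cE(\rho)$, and minimizing over admissible $\rho$ gives $\capa(a,b)\le\Mod(a,b)$; meanwhile the reverse $\cCeff(a,b)\le\capa(a,b)$ follows from the Lagrange-multiplier/maximum-principle argument identifying the capacitary minimizer with the harmonic renormalized potential. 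But this is merely reproducing the proofs of Propositions~\ref{prop:capaceff} and~\ref{prop:modcap}, so I would not include it.

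There is no real obstacle here; the only thing to be careful about is making sure the two propositions are stated (as they are) for arbitrary $a,b$, so that the combined equality holds pointwise for every pair. Accordingly the write-up is one line.

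\begin{proof}
Combine Proposition~\ref{prop:capaceff}, which asserts $\capa(a,b)=\cCeff(a,b)$, with Proposition~\ref{prop:modcap}, which asserts $\Mod(a,b)=\capa(a,b)$. Since both equalities hold for every pair of vertices $a,b\in V$, transitivity gives
\[
\cCeff(a,b)=\capa(a,b)=\Mod(a,b),
\]
as claimed.
\end{proof}
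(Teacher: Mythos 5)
Your proof matches the paper exactly: the theorem is stated there as an immediate consequence of Propositions~\ref{prop:capaceff} and~\ref{prop:modcap}, with no further argument, just as you do. Correct, and rightly kept to one line.
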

We can now exploit the definition of modulus as an infimum and obtain a comparison between the epidemic quasimetric and effective conductance.

\begin{proposition}\label{prop:upperbound}
\[
d_G(a,b)^2\Mod(a,b)\leq \epdist(a,b)
\]
\end{proposition}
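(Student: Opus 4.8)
The plan is to exploit that $\Mod(a,b)$ is an infimum over admissible densities, so that \emph{any} single well-chosen admissible density $\rho_0$ immediately yields $\Mod(a,b)\le\cE(\rho_0)$. Write $d:=d_G(a,b)$ and assume $a\ne b$, so $d\ge 1$. The density to try is the one that is constant equal to $1/d$ on the edge set $E(\Omega(a,d))\cup E(\Omega(b,d))$ and $0$ elsewhere. Its energy is exactly $\cE(\rho_0)=\frac{1}{d^2}\,|E(\Omega(a,d))\cup E(\Omega(b,d))|\le\frac{1}{d^2}(|\Omega(a,d)|+|\Omega(b,d)|)=\frac{\epdist(a,b)}{d^2}$, which is precisely the bound we want --- provided $\rho_0$ is admissible for $\Ga(a,b)$.

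To verify admissibility, I would let $\ga$ be an arbitrary curve containing both $a$ and $b$. Since $\ga$ is connected it contains a simple path $P=(x_0,x_1,\dots,x_\ell)$ with $x_0=a$, $x_\ell=b$, and $E(P)\subseteq E(\ga)$; viewed as a curve in $G$ it satisfies $\length_G(P)=\ell\ge d_G(a,b)=d$, so the vertices $x_0,\dots,x_d$ all exist. The crucial observation is that the first $d$ edges of $P$ lie in $E(\Omega(a,d))$: for $0\le i\le d$ we have $d_G(a,x_i)\le i\le d$, so $x_0,\dots,x_d\in\cB(a,d)$, hence each of the $d$ distinct edges $\{x_0,x_1\},\dots,\{x_{d-1},x_d\}$ has both endpoints in $\cB(a,d)$ and therefore belongs to the induced subgraph $\Omega(a,d)$. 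Since $\rho_0\ge 0$ everywhere, $\rlength(\ga)=\sum_{e\in E(\ga)}\rho_0(e)\ge\sum_{i=1}^{d}\rho_0(\{x_{i-1},x_i\})=d\cdot\frac1d=1$, so $\rho_0$ is admissible.

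Combining the two steps gives $d_G(a,b)^2\,\Mod(a,b)=d^2\Mod(a,b)\le d^2\,\cE(\rho_0)\le\epdist(a,b)$, which is the claim. (In fact the same argument, taking $\rho_0$ supported only on $E(\Omega(a,d))$, proves the stronger one-sided bound $d^2\Mod(a,b)\le|\Omega(a,d)|$.)

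The step I expect to require the most care is the admissibility check, and within it the observation that the $d$ ``cheap'' edges needed to force $\rlength(\ga)\ge1$ can always be located inside the ball $\cB(a,d)$ --- equivalently inside $\Omega(a,d)$ --- because an initial segment of length $d$ of any $a$--$b$ path never leaves $\cB(a,d)$. The remaining ingredients (passing from $\ga$ to an internal simple path of length at least $d$, the one-line energy computation, and the set-union bound $|E(\Omega(a,d))\cup E(\Omega(b,d))|\le|\Omega(a,d)|+|\Omega(b,d)|$) are routine.
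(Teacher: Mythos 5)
Your proof is correct and takes essentially the same route as the paper: the identical admissible density $\rho\equiv 1/d_G(a,b)$ on the edges of $\Om(a,d_G(a,b))\cup\Om(b,d_G(a,b))$ (zero elsewhere), followed by the same one-line energy bound $\cE(\rho)\leq \epdist(a,b)/d_G(a,b)^2$. Your admissibility check --- that the first $d_G(a,b)$ edges of a simple $a$--$b$ path inside any $\ga\in\Ga(a,b)$ already lie in the induced subgraph on $\cB(a,d_G(a,b))$ and hence contribute $\rho$-length $1$ --- is a sharper and more careful version of the paper's brief two-case argument, but it is the same underlying idea.
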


\begin{corollary}\label{cor:estimate}
We have
\begin{equation}\label{eq:estimate}
\epdist(a,b)\geq d_G(a,b)^2\cCeff(a,b)=\frac{d_G(a,b)^2}{\cReff(a,b)}.
\end{equation}
\end{corollary}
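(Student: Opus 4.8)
The plan is to use the definition of $\Mod(a,b)$ as an infimum over admissible densities and simply exhibit one specific admissible density whose energy is at most $\epdist(a,b)/d_G(a,b)^2$. The natural candidate is a constant multiple of the indicator function of the edge set of $\Omega(a,d_G(a,b))\cup\Omega(b,d_G(a,b))$, the subgraph ``swept out'' by the two epidemics. Write $n\defeq d_G(a,b)$, let $H$ be the union of the two induced subgraphs appearing in the definition \eqref{eq:epidemic} of the epidemic quasimetric, and set
\[
\rho\defeq \frac{1}{n}\,\eye_{E(H)},
\]
i.e., $\rho(e)=1/n$ if $e\in E(H)$ and $\rho(e)=0$ otherwise.

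The key step is checking admissibility: I must verify $\rlength(\ga)\geq 1$ for every curve $\ga\in\Ga(a,b)$, i.e., every connected subgraph containing both $a$ and $b$. Given such a $\ga$, it contains a chain of vertices $a=x_0\sim x_1\sim\cdots\sim x_m=b$, and since $d_G(a,b)=n$ this chain (hence $\ga$) uses at least $n$ edges. The point is that these first $n$ edges of the chain, say $e_1,\dots,e_n$ traversed from $a$, all lie in $E(\Omega(a,n))$: the vertices $x_0,\dots,x_n$ all satisfy $d_G(a,x_j)\leq j\leq n$, so they belong to the metric ball $\cB(a,n)$ and the edges between consecutive ones belong to the induced subgraph $\Omega(a,n)\subseteq H$. (Alternatively one can split the chain and use whichever half of the vertices is within distance $n$ of $a$ versus $b$; the cleanest argument just takes the first $n$ edges from the $a$-side.) Hence $\rlength(\ga)\geq \sum_{j=1}^n \rho(e_j)=n\cdot\frac1n=1$, so $\rho$ is admissible.

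Then, by the definition \eqref{eq:modulus} of modulus and the definition \eqref{eq:energy} of energy,
\[
\Mod(a,b)\leq \cE(\rho)=\sum_{e\in E}\rho(e)^2=\sum_{e\in E(H)}\frac{1}{n^2}=\frac{|E(H)|}{n^2}\leq \frac{|\Omega(a,n)|+|\Omega(b,n)|}{n^2}=\frac{\epdist(a,b)}{d_G(a,b)^2},
\]
where the last inequality is because $|E(H)|\leq |E(\Omega(a,n))|+|E(\Omega(b,n))|$ by subadditivity of cardinality on a union. Multiplying through by $d_G(a,b)^2$ gives $d_G(a,b)^2\Mod(a,b)\leq\epdist(a,b)$, which is Proposition \ref{prop:upperbound}. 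Corollary \ref{cor:estimate} then follows immediately by substituting $\Mod(a,b)=\cCeff(a,b)=1/\cReff(a,b)$ from Theorem \ref{thm:modcap}.

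The only place requiring genuine care — the ``main obstacle,'' though it is mild — is the admissibility verification: one must make sure the relevant initial segment of an arbitrary curve $\ga\in\Ga(a,b)$ really is supported on the edge set of one of the two epidemic subgraphs. This hinges on the observation that a curve joining $a$ to $b$ must contain a path of graph-length at least $d_G(a,b)=n$, and that walking $n$ steps along such a path from $a$ keeps all intermediate vertices within distance $n$ of $a$, hence inside $\cB(a,n)$ and so inside the induced subgraph $\Omega(a,n)$. Once that is in hand, everything else is the routine energy computation above.
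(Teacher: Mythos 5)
Your proposal is correct and follows essentially the same route as the paper: the same test density $\rho=\tfrac{1}{d_G(a,b)}$ on the edges of $\Omega(a,d_G(a,b))\cup\Omega(b,d_G(a,b))$, the same energy bound $\cE(\rho)\leq\epdist(a,b)/d_G(a,b)^2$, and then Theorem \ref{thm:modcap} to replace $\Mod(a,b)$ by $\cCeff(a,b)$. Your admissibility check (tracking the first $d_G(a,b)$ edges of a simple path from $a$, all of which lie in $\Omega(a,d_G(a,b))$) is if anything a cleaner version of the paper's two-case argument, but it is the same idea.
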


\begin{proof}[Proof of Proposition \ref{prop:upperbound}]
Recall some of the notations introduced to define the epidemic quasimetric. We considered $\Om(a,d_G(a,b))$, i.e., the subgraph of $G$ induced by the vertices that are in the ball $\cB(a,d_G(a,b))$. Define a $\rho$-density for $\Ga(a,b)$ by letting $\rho(e)=1/d_G(a,b)$, if $e$ is an edge in $\Om\defeq\Om(a,d_G(a,b))\cup\Om(b,d_G(a,b))$, and let $\rho(e)=0$ otherwise. 
We claim that this $\rho$-density is admissible for $\Ga(a,b)$. To see this pick an arbitrary curve $\ga$ that contains both $a$ and $b$. By definition of graph distance (\ref{eq:graphdist}), we have $\length_G(\ga)\geq d_G(a,b)$, and this takes care of curves that stay in $\Om$. Some curves might actually exit $\Om$, but if they do then they must exit the ball $\cB(a,d_G(a,b))$ and therefore they will again have $\rho$-length greater than one.

By (\ref{eq:modulus}) we get that
\[
\Mod(a,b)\leq \frac{|\Om|}{d_G(a,b)^2}
\]
\end{proof}

\section{Explicit and numerical computations}\label{sec:numerical}

\subsection{Some closed form expressions}

We begin by calculating some of these quantities exactly, for special cases or families of cases.

\begin{example}[Path graphs]
For each $N\in \N$, $P_N$ is the unique graph on $N$ vertices that can be labeled $x_1,\dots,x_N$ so that $x_j\sim x_{j+1}$ for $j=1,\dots,N-1$. 

The diameter of $P_N$ is $N-1$.

Being a tree, effective resistance on $P_N$ is equal to the graph distance, i.e., for $i<j$
\[
\cReff(x_i,x_j)=d_G(x_i,x_j)=j-i.
\]

On the other hand,
\[
\epdist(x_i,x_j)=\min\{j-i,i-1\} + 2(j-i)+ \min\{j-i,N-j\}
\]
 To study how much is lost in  (\ref{eq:estimate})  we consider the "discrepancy":
\[
\delta(a,b)\defeq \frac{\cReff(a,b)\epdist(a,b)}{d_G(a,b)^2}.
\]

In the case of the path graph $P_N$, we have
\[
2\leq\delta(x_i,x_j)=\min\left\{1,\frac{i-1}{j-i}\right\}+2+\min\left\{1,\frac{N-j}{j-i}\right\}\leq 4.
\]
\end{example}

\begin{example}[The star and complete graphs]
The ratio $\delta$ can grow when the diameter of a tree is very small.

For instance, the star $S_N$ is a tree with a vertex $x_0$ that plays the role of hub and has $N-1$ neighbors. The diameter is $2$ and the graph distances are either $1$ or $2$. On the other hand, the epidemic quasimetric is either $N$ or $2(N-1)$. Therefore 
$\delta$ grows linearly with $N$.

The complete graph $K_N$ is the graph on $N$ vertices with the maximum number of edges, i.e., every vertex has $N-1$ neighbors. Here the diameter is $1$, and the graph distance is constant equal to $1$. Effective resistance can be computed as follow: fix two nodes $x$ and $y$ and let $\tilde{V}$ be the remaining nodes in $V\setminus\{x,y\}$. By symmetry the potential induced by a unit current flow between $a$ and $b$ will be constant on $\tilde{V}$. Hence no current will flow along the $K_{N-2}$ complete graph induced by $\tilde{V}$. We get that $\tilde{V}$ can be thought as a single vertex connected by $N-2$ edges to $x$ and $y$ respectively. Using the "serial" and "parallel" rules from high school physics, we find that
\[
\cReff(x,y)=\frac{2}{N}.
\]
The epidemic quasimetric in this case is constant and equal to $N(N-1)$. Therefore $\delta$ is also constant and equal to $(N-1)/2$. Again we see that $\delta$ grows linearly with $N$. 
\end{example}

One might conjecture that
the discrepancy $\delta$ always grows at most linearly in the number of vertices, but that turns out to be false. Using (\ref{eq:graphdistbound}), the volume of the complete graph $K_N$ and bounding $d_G(a,b)\geq 1$, we find that $\delta(a,b)\leq N^2$. This quadratic worst behavior can be achieved by letting $G$ be the graph obtained from the complete graph $K_N$ by picking a vertex $a$ and connecting it to a new vertex $b$ by a single edge. In this situation $d_G(a,b)=1$, and $\cReff(a,b)=1$, but $\epdist(a,b)=N(N-1)/2+2$, so $\delta(a,b)=O(N^2)$. One could say that in this case the epidemic quasimetric captures a feature of the graph $G$ that effective resistance does not see.

\subsection{Computing the epidemic quasimetric}

We wrote a simple routine in Matlab (actually we wrote a couple different ones). We are given a simple (possibly weighted graph) in the form of an adjacency matrix, see Section \ref{ssec:adjacency}. In some cases, matrices are too large and have to be entered in list form. This consists of two columns  of labels from $1$ to $N$, where each row represents an edge in the graph. 

We call the adjacency matrix $K$ and  normalize it so that all the non-zero entries are equal to $1$. The matrix $K$ is zero along the diagonal so we let $B=K+I$. We then take powers $B^k$ of $B$. An entry in the $(i,j)$ spot of $B^k$ is non-zero if and only if at least one of the terms in the sum
\[
\sum_{l_1,\dots,l_k} B(i,l_1)B(l_1,l_2)\cdots B(l_k,j)
\]
is non zero, i.e., if and only if there is at least one chain $(i,l_1), \dots, (l_k,j)$ connecting $i$ and $j$. Each step $(l_s,l_{s+1})$ either moves to a neighbor or stays at the given node.  We again normalize the entries of $B^k$ to be $1$ when non-zero. Let $B_k$ be the normalized version of $B^k$ and define $B_0=I$.

Next, given a node $t$ we look at the row $t$ in the matrix $B_k$ and find a $1$ in the column corresponding to each node that is graph distance less or equal to $k$ from $t$. We use this row to form a diagonal matrix $M$ and compute the matrix $K_{t,k}\defeq MKM$. The entries of $K_{t,k}$ are of the form 

\begin{equation}
K_{t,k}(i,j)=M(i,i)K(i,j)M(j,j) =  \left\{ \begin{array}{rl} 1 &\mbox{if $i,j\in \cB(t,k)$ and $\{i,j\}\in E$}
\\ 0 &\mbox{ else }\end{array}
\right.
\end{equation}
In other words, we see that  $K_{t,k}$ is the adjacency matrix of the subgraph of $G$ induced by the vertices that are in the ball $\cB(t,k)$ centered at $t$ of radius $k$. We now compute the volume by summing all the entries of $K_{t,k}$ and dividing by $2$. This is equivalent to first summing all the rows and getting the local degrees and then summing the rows and getting the sum of all the local degrees, which by the Handshake Lemma equals twice the number of edges.

Finally, recall that the graph distance between $x$ and $y$ is the first time $y$ belongs to the ball $\cB(x,k)$:
\[
d_G(x,y)=\min\{k: B_k(x,y)=1\}.
\] 
The way we actually compute the graph distance is as follows. We flip each zero and one in $B_k$, in practice we take a constant matrix $O$ with entries $O(x,y)=1$ and let $C_k(x,y)=O(x,y)-B_k(x,y)$. Then we sum the $C_k$ from $0$ to $N$, i.e., we let $D\defeq\sum_{k=0}^N  C_k$. Then 
\[
D(x,y)=\sum_{k=0}^{d_G(x,y)-1} 1=d_G(x,y).
\]

\subsection{Numerical experiments using the epidemic quasimetric in clustering algorithms}

The epidemic quasimetric carries more information for pairs of nodes that are close relative to the diameter of the graph.

In order to write the Matlab code and check it against a concrete graph it was really useful to have a very simple object such as the one in Figure~\ref{fig:simplegr}.

\begin{figure}[h]
\includegraphics[width=\textwidth]{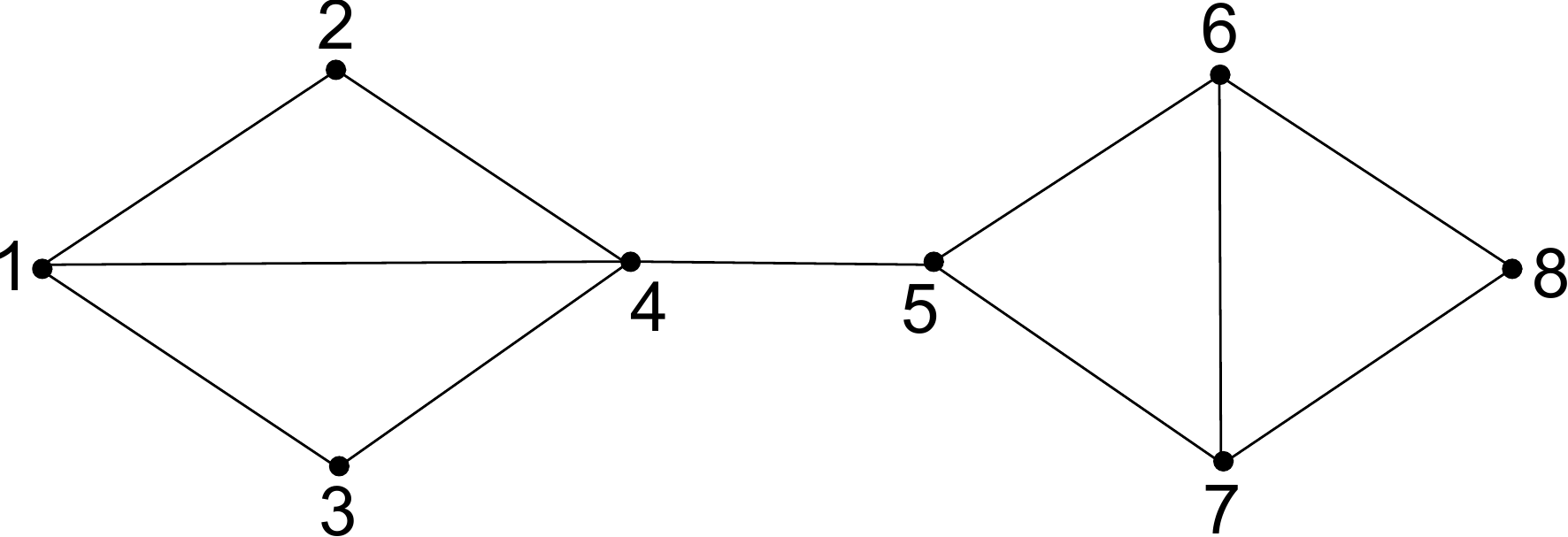}
\caption{A simple graph}\label{fig:simplegr}
\end{figure}

Once we wrote the algorithm for computing the epidemic quasimetric we tested how it would fare in the case of the famous ``Karate Club'', see Figure~\ref{fig:karateclub}. The nodes represent club members and the edges the friendship relations between them. After an argument between the two leaders, node 1 and 34, the club split into two clubs, thus the coloring of the vertices into red and blue vertices. The splitting is a real life phenomenon and then the intuition is that the web of friendships should play a role.

\begin{figure}[h]
\includegraphics[width=\textwidth]{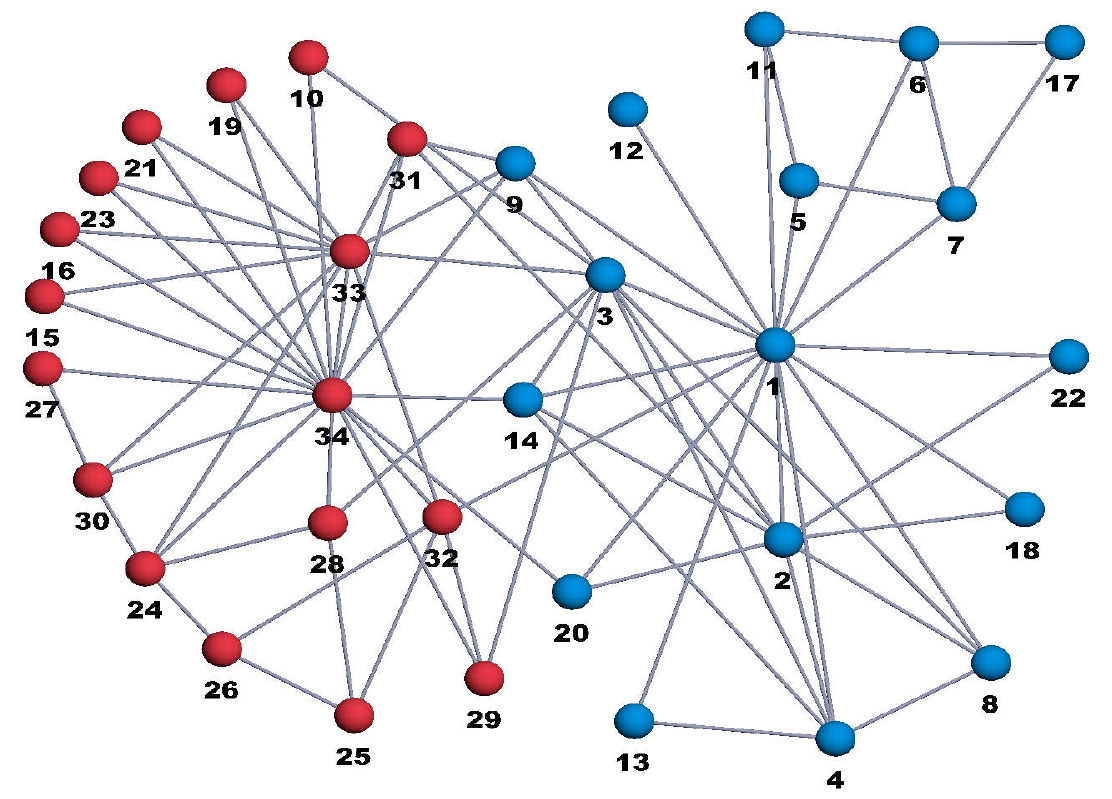}
\caption{The original split in the Karate Club}\label{fig:karateclub}
\end{figure}

We used the epidemic quasimetric as a measure of ``similarity'' between nodes in the agglomerative AGNES algorithm (this we performed in R). We obtained the dendrogram in Figure~\ref{fig:dendrogram}. This is an algorithm that begins by putting each vertex in its own class. It then `agglomerates' two classes based on the degree of similarity between them, which is computed by taking averages of all the similarities between elements of the two classes. The algorithm tries to agglomerate the least dissimilar classes first. 

\begin{figure}[h]
\includegraphics[width=\textwidth]{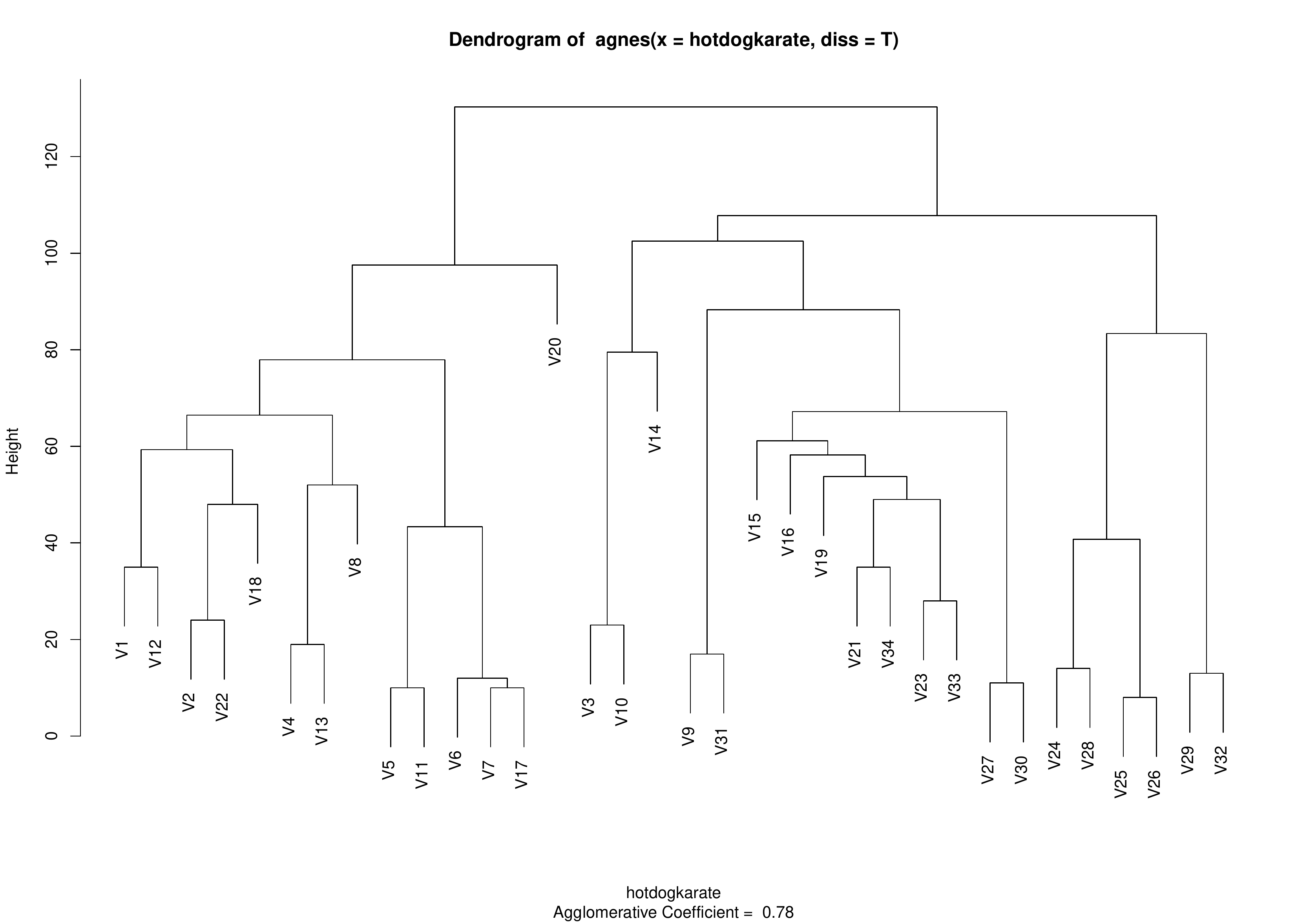}
\caption{AGNES produced this dendrogram}\label{fig:dendrogram}
\end{figure}
Using the two largest clusters we obtained a cut for the Karate club that mislabels only three vertices as can be seen in Figure \ref{fig:karateepidemiccut}.

\begin{figure}
\includegraphics[width=\textwidth]{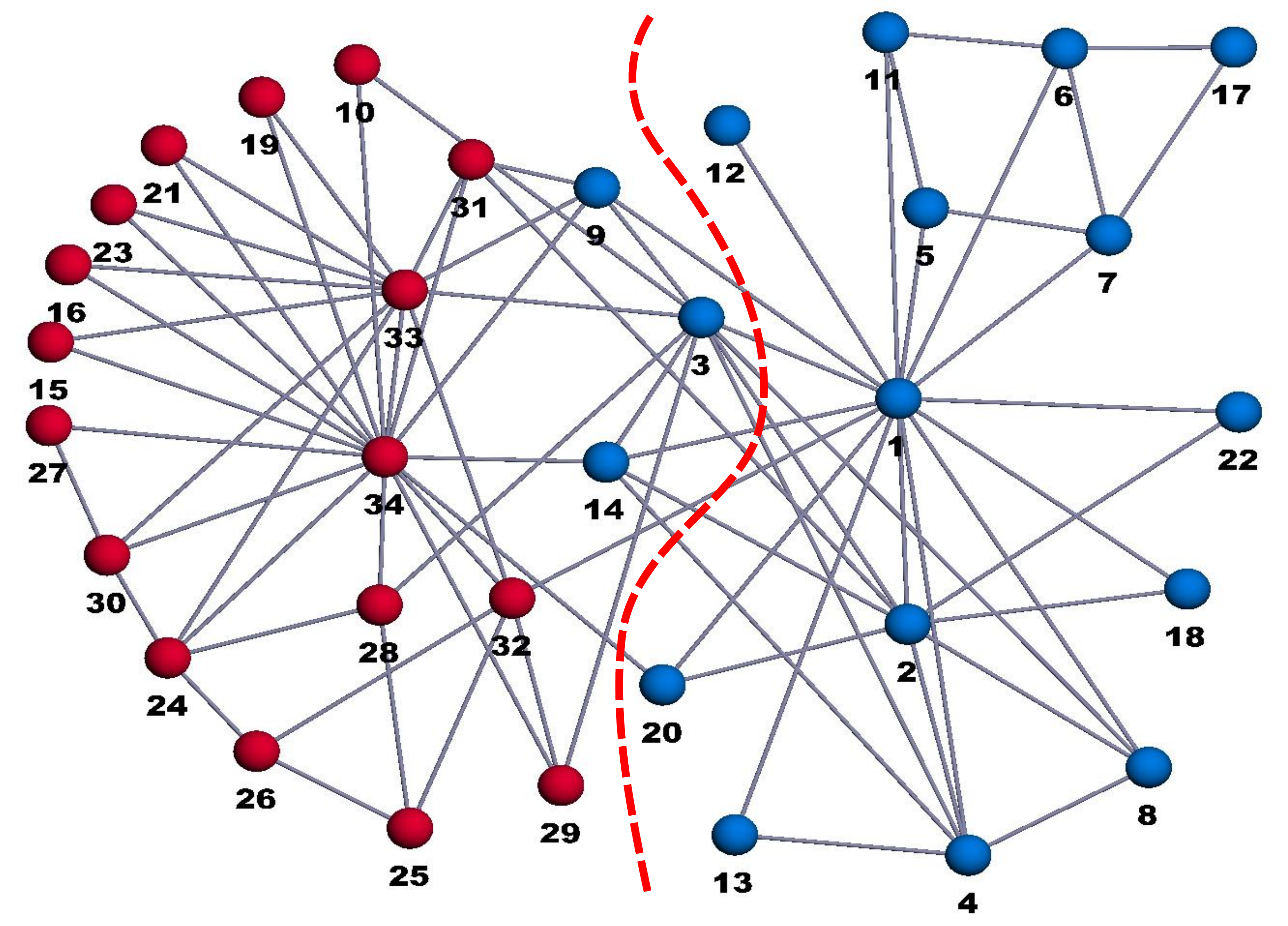}
\caption{Epidemic cut of the Karate Club}\label{fig:karateepidemiccut}
\end{figure}

\newpage

\def\cprime{$'$}


\begin{thebibliography}{BPC03}

\bibitem[A1973]{ahlfors1973}
Ahlfors, Lars~V. (1973).
\newblock {\em Conformal invariants: topics in geometric function theory}.
\newblock New York: McGraw-Hill Book Co.
\newblock McGraw-Hill Series in Higher Mathematics.

\bibitem[DS1984]{doyle-snell1984}
Doyle, Peter~G., \& Snell, J.~Laurie. (1984).
\newblock {\em Random walks and electric networks}.
\newblock Carus Mathematical Monographs, vol. 22.
\newblock Washington, DC: Mathematical Association of America.

\bibitem[G2010]{grimmett2010}
Grimmett, Geoffrey. (2010).
\newblock {\em Probability on graphs}.
\newblock Institute of Mathematical Statistics Textbooks, vol. 1.
\newblock Cambridge: Cambridge University Press.
\newblock Random processes on graphs and lattices.

\bibitem[LPW2009]{peres2009}
Levin, David~A., Peres, Yuval, \& Wilmer, Elizabeth~L. (2009).
\newblock {\em Markov chains and mixing times}.
\newblock Providence, RI: American Mathematical Society.
\newblock With a chapter by James G. Propp and David B. Wilson.

\bibitem[MS1979]{macias-segovia1979}
Mac{\'{\i}}as, Roberto~A., \& Segovia, Carlos. (1979).
\newblock Lipschitz functions on spaces of homogeneous type.
\newblock {\em Adv. in math.}, {\bf 33}(3), 257--270.

\bibitem[S1993]{semmes1993}
Semmes, Stephen. (1993).
\newblock Bi-{L}ipschitz mappings and strong {$A_\infty$} weights.
\newblock {\em Ann. acad. sci. fenn. ser. a i math.}, {\bf 18}(2), 211--248.

\end{thebibliography}
\end{document}